\documentclass[11pt]{amsart}

\usepackage{t1enc}
\usepackage[latin2]{inputenc}
\usepackage{amssymb}
\usepackage{verbatim}
\usepackage{enumerate}
\usepackage{psfrag}
\usepackage{graphicx}

\newtheorem{thm}{Theorem}[section]
\newtheorem{prop}[thm]{Proposition}
\newtheorem{prob}[thm]{Problem}
\newtheorem{dfn}[thm]{Definition}
\newtheorem{clm}[thm]{Claim}
\newtheorem{lemm}[thm]{Lemma}
\newtheorem{cor}[thm]{Corollary}

\newcommand{\lime}{\lim(\omega_1)}
\newcommand{\omg}{\omega_1}
\newcommand{\cont}{2^\omega}
\newcommand{\re}{\mathbb{R}}
\newcommand{\mad}{\mathcal{M}}

\author[D. Soukup]{D\'aniel Soukup}
\address{E\"otv\"os L\'or\'and University}
\email{daniel.t.soukup@gmail.com}
\title{Properties D and \lowercase{a}D are different}
\keywords{linearly D-spaces, aD-spaces, Martin's Axiom, guessing sequences}
\subjclass[2000]{54A25,54A35}

\begin{document}

\maketitle

\begin{abstract}
  Under $(\diamondsuit^*)$ we construct a locally countable, locally compact, 0-dimensional $T_2$ space $X$ of size $\omg$ which is aD however not even linearly D. This consistently answers a question of Arhangel'skii, whether aD implies D. Furthermore we answer two problems concerning characterization of linearly D-spaces, raised by Guo and Junnila.
\end{abstract}

\section{Introduction}

The notion of a \emph{D-space} was probably first introduced by E.K. van Douwen. We recommend G. Gruenhage's paper \cite{gg} which gives a full review on what we know and do not know about D-spaces. A.V.Arhangel'skii and R. Buzyakova defined in \cite{arhadd} a weakening of property D, called \emph{aD}. In \cite{arhcov} Arhangel'skii asked the following:\\

\textbf{Problem 4.6.} Is there a Tychonoff aD-space which is not a D-space?\\

In Section 5 we construct such a space under $(\diamondsuit^*)$. Before that we consider another weakening of property D. Recently H. Guo and H.J.K. Junnila in \cite{linD} introduced the notion of \emph{linearly $D$-spaces} and proved several nice results concerning the topic. In Sections 3 and 4 we answer the following two questions from \cite{linD} in negative (in ZFC):\\

\textbf{Problem 2.5}. Let $X$ be a $T_1$ (linearly) D-space and let $A\subseteq X$ have uncountable regular cardinality. Does $A$ either have a complete accumulation point or a subset of size $|A|$ which is closed and discrete in $X$?\\

\textbf{Problem 2.6.} Is a $T_1$-space $X$ linearly D provided that, for every set $A\subseteq X$ of uncountable regular cardinality, either $A$ has a complete accumulation point or $\overline{A}$ has a subset of size $|A|$ which is closed and discrete in $X$?\\

As we will see these questions rise naturally. The constructions from Sections 3 and 4 can be considered as preparation to the space defined in Section 5. Our construction to Problem 4.6 also answers (consistently) the following from \cite{linD}:\\

\textbf{Problem 2.12.} Is every aD-space linearly D?

\section{Definitions}

An \emph{open neighborhood assignment} (ONA, in short) on a space $(X,\tau)$ is a map $U:X\rightarrow\tau$ such that $x\in U(x)$ for every $x\in X$. X is said to be a \emph{D-space} if for every neighborhood assignment $U$, one can find a closed discrete $D\subseteq X$ such that $X=\bigcup_{d\in D}U(d)=\bigcup U[D]$ (such a set $D$ is called a \emph{kernel for $U$}). In \cite{arhadd} the authors introduced property \emph{aD}:

\begin{dfn}A space $(X,\tau)$ is said to be \emph{aD} iff for each closed $F\subseteq X$ and for each open cover $\mathcal{U}$ of $X$ there is a closed discrete $A\subseteq F$ and $\phi:A\rightarrow \mathcal{U}$ with $a\in\phi(a)$ such that $F\subseteq \cup\phi[A]$.
\end{dfn}

It is clear that D-spaces are aD. A space $X$ is \emph{irreducible} iff every open cover  $\mathcal{U}$ has a \emph{minimal open refinement} $\mathcal{U}_0$; meaning that no proper subfamily of $\mathcal{U}_0$ covers $X$. Later in \cite{arhcov} Arhangel'skii showed the following equivalence.

\begin{thm}[{\cite[Theorem 1.8]{arhcov}}]\label{irreduc}A $T_1$-space $X$ is an aD-space if and only if every closed subspace of $X$ is irreducible.
\end{thm}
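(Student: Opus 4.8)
The plan is to prove both implications through a single bridging observation connecting \emph{minimal} open covers with closed discrete sets of \emph{private points}. For an open cover $\mathcal{V}$ of a space $Y$, call a point $y_V$ \emph{private} to $V\in\mathcal{V}$ if $y_V\in V$ but $y_V\notin V'$ for every $V'\in\mathcal{V}\setminus\{V\}$. The first step is to record two easy facts. (i) If $\mathcal{V}$ is a minimal open cover of $Y$, then every $V\in\mathcal{V}$ admits a private point $y_V$, since $\mathcal{V}\setminus\{V\}$ fails to cover $Y$; moreover the set $D=\{y_V:V\in\mathcal{V}\}$ is closed and discrete in $Y$. Discreteness is witnessed by the $V$'s, as $V\cap D=\{y_V\}$, and closedness uses $T_1$ to delete the single possible point of $D$ from a neighborhood $V$ of any $y\in Y\setminus D$. (ii) Conversely, if $D\subseteq Y$ is closed discrete and $\{V_d:d\in D\}$ is an open cover with $V_d\cap D=\{d\}$ for each $d$, then this cover is automatically minimal, since $d\in V_{d'}$ would force $d\in V_{d'}\cap D=\{d'\}$, so deleting $V_d$ leaves $d$ uncovered.

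For the direction that irreducibility of every closed subspace implies \emph{aD}, I would fix a closed $F\subseteq X$ and an open cover $\mathcal{U}$ of $X$, and restrict $\mathcal{U}$ to $F$, obtaining the open cover $\{U\cap F:U\in\mathcal{U}\}$ of the closed subspace $F$. By hypothesis $F$ is irreducible, so this cover has a minimal open refinement $\mathcal{V}_0$, where each $V\in\mathcal{V}_0$ lies inside some $U_V\cap F$ with $U_V\in\mathcal{U}$. Applying fact (i) to $\mathcal{V}_0$ yields private points $a_V\in V$ whose collection $A=\{a_V:V\in\mathcal{V}_0\}$ is closed discrete in $F$, hence closed discrete in $X$ because $F$ is closed. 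Setting $\phi(a_V)=U_V$ gives $a_V\in V\subseteq U_V=\phi(a_V)$, and since $\mathcal{V}_0$ covers $F$ while each $V\subseteq U_V$, we obtain $F\subseteq\bigcup\phi[A]$, exactly as required by the definition of \emph{aD}.

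For the converse, assume $X$ is \emph{aD} and let $F\subseteq X$ be closed with an open cover $\mathcal{W}$ of $F$. Writing each $W\in\mathcal{W}$ as $U_W\cap F$ for some $U_W$ open in $X$, the family $\mathcal{U}=\{U_W:W\in\mathcal{W}\}\cup\{X\setminus F\}$ is an open cover of $X$. Feeding $F$ and $\mathcal{U}$ into the \emph{aD} property produces a closed discrete $A\subseteq F$ and $\phi:A\to\mathcal{U}$ with $a\in\phi(a)$ and $F\subseteq\bigcup\phi[A]$; since $a\in F$ forces $\phi(a)\neq X\setminus F$, I may write $\phi(a)=U_{W_a}$ and set $W_a=U_{W_a}\cap F\in\mathcal{W}$, so that $\{W_a:a\in A\}$ refines $\mathcal{W}$ and covers $F$ with $a\in W_a$. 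The main obstacle is that this refinement need not be minimal, and naive shrinking to make each $a$ private threatens to destroy the cover. The resolution exploits that $A$ is closed discrete: each $A\setminus\{a\}$ is then closed in $F$, so $V_a=W_a\setminus(A\setminus\{a\})$ is open in $F$, contains $a$, refines $\mathcal{W}$, and satisfies $V_a\cap A=\{a\}$. Crucially the cover is preserved, since a point $y\in F$ lies in some $W_a$, and either $y\notin A$, whence $y\in V_a$, or $y\in A$, say $y=a'$, whence $y\in W_{a'}\setminus(A\setminus\{a'\})=V_{a'}$. Thus $\{V_a:a\in A\}$ covers $F$, and by fact (ii) it is a minimal open refinement of $\mathcal{W}$; as $F$ and $\mathcal{W}$ were arbitrary, every closed subspace of $X$ is irreducible.
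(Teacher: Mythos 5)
Your proof is correct. Note that the paper itself gives no proof of this statement --- it is quoted as Theorem 1.8 of Arhangel'skii's article \emph{D-spaces and covering properties} --- so there is nothing internal to compare against; your argument is a sound, self-contained reconstruction of the standard one, with the two-way ``private point'' correspondence (minimal open covers $\leftrightarrow$ closed discrete sets $D$ with covers $\{V_d\}$ satisfying $V_d\cap D=\{d\}$) being exactly the right bridge, and with $T_1$ used precisely where it is needed, namely to delete the single possible point of $D$ from a basic neighborhood when verifying that the set of private points is closed.
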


Another generalization of property D is due to Guo and Junnila \cite{linD}. For a space $X$ a cover $\mathcal{U}$ is \emph{monotone} iff it is linearly ordered by inclusion.

\begin{dfn}A space $(X,\tau)$ is said to be \emph{linearly D} iff for any ONA $U:X\rightarrow\tau$ for which $\{U(x):x\in X\}$ is monotone, one can find a closed discrete set $D\subseteq X$ such that $X=\bigcup U[D]$.
\end{dfn}

We cite two results from \cite{linD}. A set $D\subseteq X$ is said to be \emph{$\mathcal{U}$-big} for a cover  $\mathcal{U}$ iff there is no $U\in \mathcal{U}$ such that $D\subseteq U$.

\begin{thm}[{\cite[Theorem 2.2]{linD}}] \label{linD} The following are equivalent for a $T_1$-space X:
\begin{enumerate}
  \item X is linearly D.
  \item For every non-trivial monotone open cover $\mathcal{U}$ of $X$, there exists a closed discrete $\mathcal{U}$-big set in $X$.
  \item For every subset $A \subseteq X$ of uncountable regular cardinality $\kappa$, there is a closed discrete subset $B$ of $X$, such that for every neighborhood
$U$ of $B$, we have $|U \cap A| = \kappa$.
\end{enumerate}
\end{thm}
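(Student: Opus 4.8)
The plan is to prove the cycle of implications by establishing $(1)\Leftrightarrow(2)$ and $(2)\Leftrightarrow(3)$, using two recurring devices: since a monotone cover $\mathcal U$ is itself linearly ordered by inclusion, it admits a strictly increasing cofinal subchain $\langle U_\alpha:\alpha<\lambda\rangle$ of regular length $\lambda=\mathrm{cf}(\mathcal U)$; and along such a well-ordered chain every point $x$ has a least member containing it, so $\beta(x)=\min\{\beta:x\in U_\beta\}$ is well-defined.

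For $(1)\Rightarrow(2)$, given a non-trivial $\mathcal U$ I pass to such a chain (which still covers $X$, being cofinal) and define the ONA $U(x)=U_{\beta(x)}$; its range is a subchain, so linear D yields a closed discrete kernel $D$ with $\bigcup_{d\in D}U(d)=X$. If $D\subseteq V$ for some $V\in\mathcal U$, then $V\subseteq U_\gamma$ for some $\gamma$, hence $\beta(d)\le\gamma$ and $U(d)\subseteq U_\gamma$ for all $d$, forcing $X=U_\gamma$, contradicting non-triviality; thus $D$ is $\mathcal U$-big. For $(2)\Rightarrow(1)$, given a monotone ONA $U$ set $\mathcal U=\{U(x):x\in X\}$; if some $U(x_0)=X$ the singleton $\{x_0\}$ is a kernel, and otherwise a $\mathcal U$-big closed discrete $D$ is itself a kernel, because for any $y$ the relation $D\not\subseteq U(y)$ gives $d\in D\setminus U(y)$, and monotonicity forces $U(y)\subseteq U(d)$, so $y\in U(d)$.

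For $(3)\Rightarrow(2)$, I again take a cofinal subchain $\langle U_\alpha:\alpha<\lambda\rangle$ of a non-trivial $\mathcal U$ and pick $x_\alpha\in U_{\alpha+1}\setminus U_\alpha$. If $\lambda=\omega$ the set $\{x_n:n\in\omega\}$ meets every $U_\alpha$ in a finite set, hence is closed discrete and $\mathcal U$-big outright. If $\lambda$ is uncountable regular I apply $(3)$ to $A=\{x_\alpha:\alpha<\lambda\}$, obtaining a closed discrete $B$: any $U\in\mathcal U$ with $B\subseteq U$ lies in some $U_\beta$, yet $U_\beta\cap A\subseteq\{x_\alpha:\alpha<\beta\}$ has size $<\lambda$, contradicting $|U\cap A|=\lambda$, so $B$ is $\mathcal U$-big.

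The substantial direction is $(2)\Rightarrow(3)$. Given $A$ with $|A|=\kappa$ regular uncountable, if $A$ has a complete accumulation point $p$ then $\{p\}$ already witnesses $(3)$, so I may assume none exists, i.e.\ every point has an open neighbourhood meeting $A$ in fewer than $\kappa$ points; I call such open sets \emph{small}. Since $\kappa$ is regular, unions of fewer than $\kappa$ small sets stay small, so the small sets form a $<\kappa$-directed open cover of $X$. The aim is to extract from them a non-trivial monotone open cover $\mathcal C$ such that every small set — hence every closed discrete set possessing a small neighbourhood — lies inside a member of $\mathcal C$; applying $(2)$ then gives a closed discrete $\mathcal C$-big set $B$, and any small neighbourhood of $B$ would sit in some member and contradict $\mathcal C$-bigness, so every neighbourhood of $B$ meets $A$ in $\kappa$ points. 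I expect the construction of $\mathcal C$ to be the main obstacle: the $<\kappa$-directedness lets one run a recursion of length $\kappa$ and keep the members small (so proper, by non-triviality), but reconciling \emph{cofinality} among all small sets with never swelling a member to all of $X$ is delicate precisely when the small-set ideal has cofinality above $\kappa$, and it is here — rather than in catching all small sets, only those enclosing a closed discrete set — that I anticipate the argument needing the most care.
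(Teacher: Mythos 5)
The paper itself does not prove this theorem (it is quoted from Guo--Junnila without proof), so your proposal can only be judged on its own merits. Three of your four implications are correct and complete: $(1)\Rightarrow(2)$ via the cofinal well-ordered subchain and the minimal-index ONA $U(x)=U_{\beta(x)}$, $(2)\Rightarrow(1)$ via the observation that a $\mathcal U$-big closed discrete set is automatically a kernel for a monotone ONA, and $(3)\Rightarrow(2)$ via the points $x_\alpha\in U_{\alpha+1}\setminus U_\alpha$ (including the separate, correct treatment of the case $\mathrm{cf}(\mathcal U)=\omega$). But $(2)\Rightarrow(3)$ is not a proof: you state the right target --- a non-trivial monotone open cover $\mathcal C$ such that every \emph{small} open set (one meeting $A$ in fewer than $\kappa$ points) lies inside a member of $\mathcal C$ --- and then explicitly defer its construction, anticipating difficulty with the cofinality of the ideal of small sets. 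That construction is the entire mathematical content of this implication, so the cycle of equivalences is not closed as written.

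The missing idea is to project to $A$ instead of working with the small open sets themselves. Well-order $A=\{a_\xi:\xi<\kappa\}$ and for $\alpha<\kappa$ put $C_\alpha=\bigcup\{V \text{ open}: V\cap A\subseteq\{a_\xi:\xi<\alpha\}\}$. Then $C_\alpha\cap A\subseteq\{a_\xi:\xi<\alpha\}$, so each $C_\alpha$ is small and in particular proper; the family is increasing by definition, hence monotone; and it covers $X$ because every point has a small neighbourhood $V$ (no complete accumulation point) whose trace $V\cap A$ has its index set bounded below $\kappa$ by regularity, whence $V\subseteq C_\alpha$ for some $\alpha<\kappa$. The same regularity argument shows that \emph{every} small open set lies in some $C_\alpha$, which is exactly the catching property you wanted. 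This is why your worry is a red herring: one never needs a chain cofinal in the ideal of small open sets under inclusion; one only needs their traces on $A$ to be caught, and the trace ideal $[A]^{<\kappa}$ does have a cofinal $\kappa$-chain, namely the initial segments, precisely because $\kappa$ is regular. With this $\mathcal C$ your own concluding argument finishes the proof: a $\mathcal C$-big closed discrete $B$ given by $(2)$ can have no small neighbourhood, so every neighbourhood of $B$ meets $A$ in exactly $\kappa$ points.
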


In Problem 2.5 the authors ask whether condition (3) can be made stronger.

\begin{thm}[{\cite[Proposition 2.4]{linD}}] A $T_1$-space $X$ is linearly D if, and only if, for every set $A\subseteq X$ of uncountable regular cardinality, either the set $A$ has
a complete accumulation point or there exists a closed discrete set $D$ of size $|A|$ and a disjoint family $\{A_d: d \in D\}$ of subsets of $A$ such
that $d\in \overline{A_d}$ for every $d\in D$.
\end{thm}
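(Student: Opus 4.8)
The plan is to prove the equivalence through condition (3) of Theorem~\ref{linD}, so throughout write $\kappa=|A|$ for a subset $A\subseteq X$ of uncountable regular cardinality.

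First I would establish that the stated property implies linearly D, i.e.\ implies condition (3). Fix $A$ and split into the two cases offered. If $A$ has a complete accumulation point $x$, then $B=\{x\}$ is closed (as $X$ is $T_1$) and discrete, and every neighbourhood of $B$ is a neighbourhood of $x$, hence meets $A$ in $\kappa$ points by definition; so (3) holds. If instead there is a closed discrete $D$ of size $\kappa$ and a disjoint family $\{A_d:d\in D\}$ with $d\in\overline{A_d}$, then I claim $B=D$ witnesses (3): given any open $U\supseteq D$, for each $d\in D$ the set $U$ is a neighbourhood of $d$, so $U\cap A_d\neq\emptyset$; picking one point from each $U\cap A_d$ yields $\kappa$ distinct points of $U\cap A$ by disjointness, whence $|U\cap A|=\kappa$. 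This direction is routine.

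The substantial direction is linearly D $\Rightarrow$ the stated property. Assume (3) and fix $A$; if $A$ has a complete accumulation point we are in the first case, so assume it has none, so that every $x\in X$ has an open neighbourhood meeting $A$ in fewer than $\kappa$ points. Let $B$ be the closed discrete set given by (3). The first step is to show $B^\ast=B\cap\overline{A}$ has size at least $\kappa$: otherwise, assigning to each $b\in B\setminus\overline{A}$ a neighbourhood disjoint from $A$ and to each $b\in B^\ast$ a neighbourhood $W_b$ with $|W_b\cap A|<\kappa$, the union $U$ of these is a neighbourhood of $B$ with $U\cap A\subseteq\bigcup_{b\in B^\ast}(W_b\cap A)$, a union of fewer than $\kappa$ sets each of size $<\kappa$, hence $|U\cap A|<\kappa$ by regularity, contradicting (3). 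Since a subset of a closed discrete set is closed discrete, any $D\subseteq B^\ast$ of size $\kappa$ is automatically closed discrete, which is how I intend to secure closed discreteness of the kernel for free.

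It remains to carve out of $A$ a disjoint family clustering at $\kappa$ points of $B^\ast$; this is the main obstacle, since shrinking a set to achieve disjointness can destroy the clustering $d\in\overline{A_d}$. I would resolve it by a transfinite recursion of length $\kappa$. Having chosen distinct $b_\beta\in B^\ast$ and disjoint $A_{b_\beta}\subseteq A$ with $|A_{b_\beta}|<\kappa$ and $b_\beta\in\overline{A_{b_\beta}}$ for $\beta<\alpha$, set $F_\alpha=\bigcup_{\beta<\alpha}A_{b_\beta}$, which has size $<\kappa$ by regularity. Re-running the counting argument of the previous paragraph with $A\setminus F_\alpha$ in place of $A$, and using that $B$ still satisfies $|U\cap A|=\kappa$ for all its neighbourhoods $U$, shows that $\{b\in B^\ast: b\in\overline{A\setminus F_\alpha}\}$ has size $\geq\kappa$, so I may pick $b_\alpha$ in it distinct from all earlier $b_\beta$ (possible since there are $\geq\kappa$ candidates and only $|\alpha|<\kappa$ were used); choosing a neighbourhood $W$ of $b_\alpha$ with $|W\cap A|<\kappa$ and putting $A_{b_\alpha}=W\cap(A\setminus F_\alpha)$ gives a set of size $<\kappa$, disjoint from every earlier $A_{b_\beta}$, with $b_\alpha\in\overline{A_{b_\alpha}}$ because every neighbourhood of $b_\alpha$ meets $A\setminus F_\alpha$ inside $W$. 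After $\kappa$ steps $D=\{b_\alpha:\alpha<\kappa\}\subseteq B^\ast$ is closed discrete of size $\kappa$ carrying the required disjoint family. The delicate points to verify carefully are the regularity bookkeeping ensuring $|F_\alpha|<\kappa$ at every stage and the reusability of the single set $B$ throughout the recursion.
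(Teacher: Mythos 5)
Your proposal is correct, but note that the paper itself contains no proof of this statement to compare against: it is quoted verbatim as Proposition~2.4 of Guo and Junnila's article \cite{linD}, and the paper only uses it as a black box. So your argument stands or falls on its own, and it stands. The easy direction (the dichotomy implies condition (3) of Theorem~\ref{linD}) is handled correctly in both cases, the complete accumulation point case using $T_1$ to make a singleton closed discrete. In the substantial direction, the two key points both check out: first, the regularity counting argument showing $|B\cap\overline{A}|\geq\kappa$ (a neighborhood of $B$ assembled from sets missing $A$ and sets meeting $A$ in $<\kappa$ points would meet $A$ in $<\kappa$ points, contradicting (3)); second, the observation that makes the recursion go through, namely that the \emph{same} witness $B$ works for $A\setminus F_\alpha$ whenever $|F_\alpha|<\kappa$, since $|U\cap A|=\kappa$ and $|F_\alpha|<\kappa$ force $|U\cap(A\setminus F_\alpha)|=\kappa$. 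The bookkeeping is also sound: each $A_{b_\beta}$ is contained in a neighborhood meeting $A$ in $<\kappa$ points (this is where the no-complete-accumulation-point hypothesis enters), so $|F_\alpha|<\kappa$ by regularity of $\kappa$, and closed discreteness of the final $D$ is free because any subset of a closed discrete set is closed discrete. One stylistic remark: you only ever use the instance of the counting argument for $A\setminus F_\alpha$ (the case $F_0=\emptyset$ subsumes the preliminary step), so the proof could be compressed by stating that lemma once with the superfluous first paragraph of the hard direction folded into the recursion.
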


In Problem 2.6 the authors ask whether the second condition of this dichotomy can be weakened.

\section{On Problem 2.5 from \cite{linD}}

In this section we give a negative answer to Problem 2.5. For this let us use the following notion for a space $X$. We say that \emph{$X$ satisfies $(*)$} iff
\begin{itemize}
\item[$(*)$] for every regular, uncountable cardinal $\kappa$ and $A\in[X]^\kappa$ there is a complete accumulation point of $A$ or $A$ has a subset of size $\kappa$ which is closed discrete in $X$.
\end{itemize}
    Problem 2.5 can be rephrased as whether property D implies $(*)$? We will show the following:
\begin{enumerate}
  \item There exists a locally countable $T_2$ D-space $X$ with cardinality $\omg$ which does not satisfy $(*)$.
  \item The existence of a locally countable, locally compact, $T_2$ D-space $X$ with cardinality $<\cont$ which does not satisfy $(*)$ is independent of ZFC.
  \item There exists a locally countable, locally compact $T_3$ (even 0-dimensional) D-space $X$ with cardinality $\cont$ which does not satisfy $(*)$.
\end{enumerate}

First let us observe the following.

\begin{prop}\label{Dunion}Suppose that the space $X$ is the union of a closed discrete set and a D-subspace. Then $X$ is a D-space.
\end{prop}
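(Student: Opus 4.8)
The plan is to reduce the problem to covering a single closed subset with a kernel that is automatically closed discrete in all of $X$. Write $X=C\cup Y$, where $C$ is closed discrete and $Y$ is a D-subspace. First I would arrange that $C$ and $Y$ are disjoint: since a subset of a closed discrete set is again closed discrete, I may replace $C$ by $C\setminus Y$ without losing either hypothesis, so I assume $X=Y\sqcup C$. The gain is that now $Y=X\setminus C$ is \emph{open}, which is what will let a kernel computed inside $Y$ behave well in $X$.

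Now fix an arbitrary ONA $U:X\to\tau$; I must produce a closed discrete kernel. Put $G=\bigcup_{c\in C}U(c)$, an open set containing $C$, and let $Z=X\setminus G$. Because $C\subseteq G$ we have $Z\subseteq Y$, and since $G$ is open, $Z$ is closed in $X$ and in particular closed in $Y$. Closed subspaces of D-spaces are D-spaces (given an ONA on the closed set, extend it on the complement by the open complement itself and restrict the resulting kernel), so $Z$ is a D-space. Applying the D-property of $Z$ to the ONA $z\mapsto U(z)\cap Z$ yields a set $D_Z\subseteq Z$, closed discrete in $Z$, with $Z\subseteq\bigcup_{z\in D_Z}U(z)$. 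Since $Z$ is closed in $X$, the set $D_Z$ is closed discrete in $X$ as well (the subspace topology $D_Z$ inherits from $X$ agrees with the one it inherits from $Z$, and closedness passes through the closed set $Z$).

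Finally I would set $D=D_Z\cup C$. This covers $X$, because $X=G\cup Z$, while $G=\bigcup_{c\in C}U(c)$ is covered by the $C$-part and $Z$ by the $D_Z$-part, so $X\subseteq\bigcup_{d\in D}U(d)$. Moreover $D$ is closed discrete, being the union of the two closed discrete sets $D_Z$ and $C$ (in any space the union of two closed discrete sets is closed discrete: each point is isolated in one of them, and since the other is closed one can shrink a neighborhood to avoid it). The step I expect to be the main obstacle is exactly the discreteness of the kernel: the naive choice, namely $D_Y\cup C$ where $D_Y$ is a kernel for all of $Y$, need not be discrete, since $D_Y$ may accumulate at points of $C$. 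Passing to $Z$, the portion of $Y$ left uncovered by the neighborhoods $U(c)$, is what repairs this, because each $c\in C$ has the neighborhood $U(c)$ disjoint from $Z\supseteq D_Z$, so no point of $C$ is a limit of the kernel coming from $Y$.
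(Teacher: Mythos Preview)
Your proof is correct and follows essentially the same route as the paper's: cover the closed discrete part $C$ by its assigned neighborhoods, observe that the uncovered remainder $Z=X\setminus\bigcup_{c\in C}U(c)$ is a closed sub-D-space, take a kernel $D_Z$ there, and output $D_Z\cup C$. Your preliminary reduction to $C\cap Y=\emptyset$ is not needed (since $Z=X\setminus G$ is automatically closed in $X$ regardless), but it does no harm, and you supply the verification that $D_Z\cup C$ is closed discrete which the paper leaves implicit.
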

\begin{proof} Let $X=Y\cup Z$ such that $Y$ is closed discrete, $Z$ is a D-space. Let $U$ be an ONA on $X$. $Z_0=Z\setminus \bigcup\{U(y):y\in Y\}$ is a closed subspace of the D-space $Z$, thus $Z_0$ is a D-space either. There is a closed discrete kernel $D_0$ for the ONA $U|_{Z_0}$ on $Z_0$. Then $D=D_0\cup Y$ is a closed discrete kernel for $U$.
\end{proof}

\begin{prop} There exists a locally countable $T_2$ D-space $X$ with cardinality $\omg$ which does not satisfy $(*)$.
\end{prop}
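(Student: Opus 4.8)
The plan is to exploit the fact that local countability forbids complete accumulation points of uncountable sets: if every point of $X$ has a countable neighbourhood, then for any $A\in[X]^{\omega_1}$ and any $x\in X$ we have $|U\cap A|\le\omega<\omega_1=|A|$ for a suitable neighbourhood $U$ of $x$, so no point is a complete accumulation point of $A$. Hence, to violate $(*)$ it suffices to produce a set $A\in[X]^{\omega_1}$ no uncountable subset of which is closed and discrete in $X$; equivalently, every uncountable (indeed, every infinite) subset of $A$ should have a limit point somewhere in $X$.

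First I would fix the underlying set $X=\omega_1\cup\{p_\delta:\delta<\omega_1\text{ limit}\}$, with the ordinals and the new points $p_\delta$ distinct, and put $A=\omega_1$ and $E=\{p_\delta:\delta\text{ limit}\}$. The topology is defined by declaring every ordinal $\xi<\omega_1$ isolated and giving each $p_\delta$ the neighbourhood base $\{\,\{p_\delta\}\cup(\gamma,\delta):\gamma<\delta\,\}$, where $(\gamma,\delta)\subseteq\omega_1$ is an open interval of ordinals. I would then check the elementary properties: all basic neighbourhoods are countable, so $X$ is locally countable and $|X|=\omega_1$; and $X$ is Hausdorff, the only pair needing attention being $p_\delta\neq p_{\delta'}$ with $\delta<\delta'$, which is separated by $\{p_\delta\}\cup(\gamma,\delta)$ and $\{p_{\delta'}\}\cup(\delta,\delta')$. (Note that $p_\delta$ is deliberately kept off the point $\delta$ itself, so that the interval below $\delta$ is ``claimed'' by $p_\delta$ and by no ordinal; this is what secures $T_2$.)

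The heart of the argument is that $X$ is a D-space while $A$ witnesses the failure of $(*)$, and these two demands pull in opposite directions: any locally countable D-space of size $\omega_1$ must carry an uncountable closed discrete set (apply the definition of D to the neighbourhood assignment $x\mapsto V_x$ into countable neighbourhoods; a countable kernel would make $X$ countable), yet no uncountable subset of $A$ may be closed discrete. The construction resolves this by placing the unavoidable closed discrete set \emph{outside} $A$. I would verify that $E$ is closed discrete (each $U(p_\delta)$ meets $E$ only in $p_\delta$, and no ordinal lies in $\overline{E}$) and that the subspace $A$ is discrete, hence a D-space; Proposition~\ref{Dunion} then yields at once that $X=E\cup A$ is a D-space. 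For the failure of $(*)$ I would take an arbitrary infinite $B\subseteq A$, choose a strictly increasing sequence $b_0<b_1<\cdots$ in $B$ with supremum a limit ordinal $\delta$, and observe that $\{b_n\}$ is cofinal in $\delta$, so $p_\delta$ is a limit point of $B$; thus no infinite subset of $A$ is closed discrete in $X$, while $A$ has size $\omega_1$ and no complete accumulation point.

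I expect the main obstacle to be conceptual rather than computational: recognising that one cannot take $A$ to be a genuine copy of $\omega_1$, since in a Hausdorff locally countable D-space of size $\omega_1$ such a copy would be forced to be closed, hence a countably compact closed D-subspace, hence compact and therefore countable; and that a purely ``guessing''/almost disjoint approach with a discrete $A$ would require catching every uncountable subset of $A$ by countable neighbourhoods, which is not available in ZFC. The device that makes the argument work in ZFC is the splitting of each limit ordinal into an isolated ordinal together with a ghost limit $p_\delta$: the order of $\omega_1$ supplies, for free, a limit point of every infinite subset of $A$, so that $A$ is relatively countably compact in $X$ while remaining discrete as a subspace -- exactly the configuration needed to invoke Proposition~\ref{Dunion}.
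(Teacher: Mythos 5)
Your construction is essentially the paper's own: the paper takes $X=\omg\times 2$ with $\omg\times\{0\}$ discrete and gives $(\alpha,1)$ the neighbourhood base $\bigl\{\{(\alpha,1)\}\cup\bigl((\beta,\alpha)\times\{0\}\bigr):\beta<\alpha\bigr\}$, which is exactly your ghost-point idea (the paper attaches a ghost to every ordinal rather than only to limits, an immaterial difference), and it concludes via Proposition~\ref{Dunion} and the same two observations about $A$. Your proof is correct and matches the paper's approach.
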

\begin{proof} Let $X=\omg\times 2$. We define the topology on $X$ as follows. Let $\omg\times \{0\}$ be discrete. For $\alpha<\omg$ let $(\alpha,1)$ have the following neighborhood base:
$$ \bigl\{\{(\alpha,1)\}\cup \bigl((\beta,\alpha)\times\{0\}\bigr):\beta<\alpha\bigr\}.$$
Clearly, $X$ is a locally countable, $T_2$ space. Observe that $\omg\times\{1\}\subseteq X$ is closed discrete, $\omg\times\{0\}\subseteq X$ is discrete, hence $D$. Thus $X$ is a D-space by Proposition \ref{Dunion}. Let $A=\omg\times \{0\}$. Then clearly any infinite subset of $A$ has an accumulation point in $X$. Thus $X$ does not satisfy $(*)$, since there is no full accumulation point of $A$ and any infinite subset of $A$ is not closed discrete in $X$.
\end{proof}

This answers the Problem 2.5 in the negative direction by a $T_2$ counterexample.

\begin{center}
$\star$
\end{center}

The question whether a regular space with this property exists is natural. First we will show, that the existence of a "nice" regular counterexample with cardinality below $\cont$ is independent. We will need a weakening of the \emph{axiom $(t)$} which was introduced by I. Juh\'asz in \cite{juhi}.

\begin{dfn} The \emph{weak (t) axiom}: there exists a \emph{weak (t)-sequences} $\{A_\alpha:\alpha\in \lime\}$, meaning that $A_\alpha \subseteq \alpha$ is an $\omega$-sequence converging to $\alpha$ and for every $X\in[\omega_1]^{\omega_1}$ there is a limit $\alpha$ such that $|X\cap A_\alpha|=\omega$.
\end{dfn}

The existence of such sequences is independent of ZFC. Under MA there is no weak (t)-sequence sequence and adding one Cohen real to any model adds a (weak) (t)-sequence either (see \cite{juhi}).

\begin{prop} Suppose the weak (t)-axiom. Then there exists a locally compact, locally countable, 0-dimensional $T_2$ D-space $X$ which does not satisfy $(*)$.
\end{prop}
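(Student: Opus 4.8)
The plan is to keep the skeleton of the preceding $T_2$ example but to replace its ``fan'' neighborhoods by genuinely convergent sequences, so that the resulting space becomes locally compact and $0$-dimensional; the weak $(t)$-sequence is then what forces the bottom level to carry no uncountable closed discrete subset. Fix a weak $(t)$-sequence $\{A_\alpha:\alpha\in\lime\}$ and let $X=\omg\times 2$. I would declare every point of $\omg\times\{0\}$ isolated, leave each $(\alpha,1)$ isolated for successor $\alpha$, and for $\alpha\in\lime$ give $(\alpha,1)$ the neighborhood base $\bigl\{\{(\alpha,1)\}\cup\bigl((A_\alpha\setminus F)\times\{0\}\bigr):F\in[A_\alpha]^{<\omega}\bigr\}$. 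The one structural fact underlying everything is that the ladders are pairwise almost disjoint: since $A_\alpha$ is cofinal in $\alpha$ of order type $\omega$, for $\alpha<\beta$ the tail of $A_\beta$ lies above $\alpha$, whence $A_\alpha\cap A_\beta$ is finite.

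First I would check the point-set properties. Almost-disjointness lets me separate $(\alpha,1)$ from $(\beta,1)$ by deleting the finite set $A_\alpha\cap A_\beta$ from their bases, so $X$ is $T_2$; the same observation shows each $\{(\alpha,1)\}\cup(A_\alpha\times\{0\})$ is clopen, and as a convergent $\omega$-sequence together with its limit it is compact. Hence $X$ is a locally compact, locally countable (all basic neighborhoods are countable), $0$-dimensional $T_2$ space.

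Next I would obtain the D-property from Proposition \ref{Dunion}. The set $\omg\times\{1\}$ is closed discrete: it is discrete because $A_\alpha\subseteq\omg\times\{0\}$, so each basic neighborhood of $(\alpha,1)$ meets $\omg\times\{1\}$ only in itself, and it is closed because every other point, lying in $\omg\times\{0\}$, is isolated. Its complement $\omg\times\{0\}$ is discrete, hence a D-space, so $X$ is the union of a closed discrete set and a D-subspace and is therefore a D-space.

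Finally, the failure of $(*)$ is where the axiom is consumed. Take $A=\omg\times\{0\}$, of size $\omg$. No point of $X$ is a complete accumulation point of $A$, as every point has a countable neighborhood. Given $B\subseteq A$ with $|B|=\omg$, say $B=B'\times\{0\}$, the defining property of the weak $(t)$-sequence produces $\alpha\in\lime$ with $|B'\cap A_\alpha|=\omega$; then every basic neighborhood of $(\alpha,1)$ meets $B$ in an infinite set, so $(\alpha,1)\in\overline B\setminus B$. Thus no uncountable subset of $A$ is even closed, let alone closed discrete, and $X$ fails $(*)$. I expect the last step to be the crux: arranging that $A$ has no uncountable closed discrete subset while the space stays regular and locally compact is precisely what a weak $(t)$-sequence buys us, and it is the only place the hypothesis enters — everything else is routine bookkeeping around the almost-disjointness of the ladders.
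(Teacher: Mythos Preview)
Your proposal is correct and follows essentially the same construction as the paper: the same underlying set $\omg\times 2$, the same weak $(t)$-sequence attached to the limit points of the upper level, the same decomposition into a closed discrete set and a discrete set to get property~D via Proposition~\ref{Dunion}, and the same use of the guessing property to kill uncountable closed discrete subsets of $\omg\times\{0\}$. The only cosmetic difference is that you take cofinite subsets $A_\alpha\setminus F$ in your basic neighborhoods while the paper takes tails $A_\alpha\setminus\beta$; since each $A_\alpha$ has order type $\omega$, these generate the same filter, and your explicit mention of almost-disjointness of the ladders simply spells out what the paper's ``Clearly'' hides.
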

\begin{proof} Suppose that $\mathcal{A}=\{A_\alpha:\alpha\in \lime\}$ is a weak (t)-sequence. Let $X=\omg \times 2$. Define the topology on $X$ as follows. Let $\omg\times \{0\}$ be discrete. For $\alpha\in\lime$ let $(\alpha,1)$ have the following neighborhood base: $$\bigl\{\{(\alpha,1)\}\cup \bigl((A_\alpha\setminus\beta) \times \{0\}\bigr): \beta < \alpha\bigr\}.$$ For successor $\alpha<\omg$ let $(\alpha,1)$ be discrete. Clearly, $X$ is a locally countable, locally compact, 0-dimensional $T_2$ space. Notice that $\omg\times\{1\}\subseteq X$ is closed discrete, $\omg\times\{0\}\subseteq X$ is discrete, hence $D$. Thus $X$ is a D-space by Proposition \ref{Dunion}. Let $A=\omg\times \{0\}$. We prove that any uncountable $B\subseteq A$ is not closed discrete in $X$, hence $X$ does not satisfy $(*)$. Let $B_0=\{\alpha<\omg:(\alpha,0)\in B\}\in[\omg]^{\omg}$. Since $\mathcal{A}$ is a weak (t)-sequence, there is $\alpha\in\lime$ such that $|A_\alpha\cap B_0|=\omega$. Clearly, $B$ accumulates to $(\alpha,1)$, thus $B$ is not closed discrete in $X$.
\end{proof}

\textbf{Remark:} In \cite{ishiu} T. Ishiu uses guessing sequences to refine the standard topology on an ordinal.\\

 Now our aim is to prove Proposition \ref{MAkov1} which implies that under MA there is no such space. The following was proved by Z. Balogh  (actually more, but we only need this):

\begin{thm}[{\cite[Theorem 2.2]{balogh}}] \label{balogh}Suppose MA. Then for any locally countable, locally compact space $X$ of cardinality $<2^\omega$ exactly one of the following is true:
\begin{itemize}
  \item X is the countable union of closed discrete subspaces,
  \item X contains a perfect preimage of $\omega_1$ with the order topology.
\end{itemize}
\end{thm}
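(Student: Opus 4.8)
The statement has two halves: that the two alternatives are mutually exclusive, and that at least one always holds. The exclusivity is soft, so I would dispatch it first. Writing \emph{$\sigma$-cd} for ``countable union of closed discrete subspaces'', this property is inherited by every subspace, so it is enough to see that a perfect preimage of $\omg$ is never $\sigma$-cd. If $f\colon P\to\omg$ is perfect and $P=\bigcup_{n<\omega}D_n$ with each $D_n$ closed discrete, then $f[D_n]$ is closed (as $f$ is a closed map) and $D_n$ meets every (compact) fibre in a finite set, so $f\upharpoonright D_n$ is finite-to-one. Were $\beta$ a limit point of $f[D_n]$, points $d_k\in D_n$ with $f(d_k)\to\beta$ would lie in the compact set $f^{-1}[0,\beta]$ and hence accumulate, contradicting closed discreteness; thus each $f[D_n]$ is closed discrete, so finite in $\omg$, and $f[P]$ is countable, contradicting surjectivity. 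Hence no $\sigma$-cd space contains a perfect preimage of $\omg$.

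For the existence half, assume $X$ is \emph{not} $\sigma$-cd. I would first record two structural facts: a locally compact $T_2$ space is regular, and a countable regular space is zero-dimensional, so $X$ has a base of countable compact clopen sets --- this is what makes the target $\omg$ (itself zero-dimensional, with clopen initial segments) reachable by a continuous map. Next I would strip away the ``small'' part: after deleting a suitable open $\sigma$-cd set one wants to reduce to a nonempty closed subspace $Y$ no nonempty relatively open piece of which is $\sigma$-cd. Justifying that the deleted part really is $\sigma$-cd is already non-trivial and is one of the places the hypotheses $|X|<\cont$ and MA are used.

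On $Y$ I would construct, by recursion on $\alpha<\omg$, an increasing chain of compact clopen sets $K_\alpha$ whose union will be the desired preimage. At successor steps I use non-$\sigma$-cd-ness to adjoin a fresh compact clopen chunk; at a limit $\lambda$ I set $K_\lambda$ to be a compact clopen set capturing the closure of $\bigcup_{\beta<\lambda}K_\beta$, arranging that this closure is compact and strictly larger than the union, so that a genuine boundary fibre appears over $\lambda$. Putting $P=\bigcup_{\alpha<\omg}K_\alpha$ and $f(x)=\min\{\alpha:x\in K_\alpha\}$, every initial segment $[0,\alpha]$ of $\omg$ is clopen, so $f^{-1}[0,\alpha]=K_\alpha$ is compact; thus $f$ is proper, and since $\omg$ is locally compact $T_2$ this makes $f$ closed, i.e.\ a perfect surjection onto $\omg$.

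The main obstacle is exactly the limit stage: the increasing union $\bigcup_{\beta<\lambda}K_\beta$ is a countable set whose closure may fail to be compact (the chain can ``escape''), and one must keep this closure compact while still forcing new boundary points, and do so through all $\omg$ limits. This is where Martin's Axiom is indispensable: with $\omg<\cont=\mathfrak{p}$ one can find the pseudo-intersection-type continuations that keep the tower of compacta coherent at each of the $<\cont$ limit stages, while the hereditary failure of $\sigma$-cd on $Y$ guarantees there are always fresh points to trap. I expect essentially all of the difficulty of the theorem to sit in making this limit-stage compactness persist for $\omg$ steps.
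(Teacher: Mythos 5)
This statement is not proved in the paper at all: it is Balogh's theorem, quoted verbatim from \cite{balogh} (Theorem 2.2 there) and used as a black box, so there is no ``paper's own proof'' to compare your argument against. Judged on its own terms, your proposal splits into two very unequal halves. The exclusivity half is correct and complete: fibres of a perfect map are compact, so a closed discrete set meets each fibre finitely; your accumulation argument shows each $f[D_n]$ is closed discrete in $\omg$, hence finite by countable compactness of $\omg$, so $f[P]$ would be countable. That part would survive refereeing.

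The existence half, however, is an outline with the actual content missing, and you essentially say so yourself. Two steps are genuine gaps. First, the reduction to a closed $Y$ none of whose nonempty relatively open subsets is $\sigma$-closed-discrete: you note that justifying the deleted open part is $\sigma$-closed-discrete ``is non-trivial and is one of the places MA is used,'' but you never justify it (an arbitrary union of open $\sigma$-closed-discrete sets need not be $\sigma$-closed-discrete, so something --- a maximality argument plus MA plus $|X|<\cont$ --- must be supplied). Second, and more seriously, the limit stage of the recursion is exactly where the theorem lives, and your treatment of it is ``this is where Martin's Axiom is indispensable \dots\ I expect essentially all of the difficulty of the theorem to sit here.'' That is a correct diagnosis, not a proof: you specify no partial order, no dense sets, no verification of ccc or $\sigma$-centeredness, and no argument that a compact clopen $K_\lambda$ containing $\overline{\bigcup_{\beta<\lambda}K_\beta}$ can be found at each of the $\omg$ limit stages while new points keep being trapped. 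Since the one nontrivial assertion is left as an expectation, the existence direction is not established; if you want this result you should cite Balogh, as the paper does, rather than reprove it.
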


From this and the following observation we can deduce Proposition \ref{MAkov1}.

\begin{prop}[{\cite[Proposition 7.]{borges}}]\label{borges}If the space $X$ is the countable union of closed D-subspace then $X$ is a D-space.
\end{prop}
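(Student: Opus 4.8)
The plan is to build the kernel $D$ by a countable recursion through the pieces $X_n$, at each step handling only the part of the space not yet covered. Write $X=\bigcup_{n<\omega}X_n$ where each $X_n$ is closed in $X$ and is a D-space in its subspace topology, and fix an arbitrary ONA $U$ on $X$. I would maintain an increasing sequence of open sets $W_n$ recording how much of $X$ has been covered so far, starting from $W_{-1}=\emptyset$.

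At stage $n$ set $F_n=X_n\setminus W_{n-1}$. Since $X_n$ is closed and $W_{n-1}$ is open, $F_n$ is closed in $X$, hence a closed subspace of the D-space $X_n$ and therefore itself a D-space (closed subspaces of D-spaces are D-spaces, as already used in the proof of Proposition \ref{Dunion}). The map $x\mapsto U(x)\cap F_n$ is an ONA on $F_n$, so there is a set $D_n\subseteq F_n$, closed discrete in $F_n$, with $F_n\subseteq\bigcup_{d\in D_n}U(d)$. I then put $W_n=W_{n-1}\cup\bigcup_{d\in D_n}U(d)$ and let $D=\bigcup_{n<\omega}D_n$.

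Coverage is immediate: every $x\in X$ lies in some $X_n$, and $X_n\subseteq W_n$ because $X_n\cap W_{n-1}\subseteq W_n$ while $F_n\subseteq\bigcup_{d\in D_n}U(d)\subseteq W_n$; hence $x$ is covered by some $U(d)$ with $d\in D$. The real content is to check that $D$ is closed discrete in $X$. Two facts drive this. First, each $D_n$ is closed discrete not merely in $F_n$ but in all of $X$, since $F_n$ is closed in $X$. Second, and crucially, $D_m\cap W_n=\emptyset$ whenever $m>n$: indeed $D_m\subseteq F_m=X_m\setminus W_{m-1}$ and $W_{m-1}\supseteq W_n$, so $D_m$ avoids $W_n$. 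Consequently, if $x\in W_n$ (and every point of $X$ lies in some such $W_n$), then the open neighborhood $W_n$ of $x$ meets $D$ only in the finite union $D_0\cup\dots\cup D_n$, which is closed discrete in $X$. From this both the discreteness of $D$ (each of its points is isolated in $D$) and its closedness (no point of $X\setminus D$ is a limit of $D$) follow by intersecting $W_n$ with the complement of the relevant finite closed discrete set.

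I expect the only genuine difficulty to be this last step: a countable union of closed discrete sets need not be closed discrete, so one cannot simply take $D=\bigcup_n D_n$ and hope for the best. The recursion is engineered precisely to defeat this, by arranging that the nested open sets $W_n$ confine all but finitely many of the $D_m$ away from any prescribed point; everything else is routine once $D_n$ is forced to live in the still-uncovered closed remainder $F_n$.
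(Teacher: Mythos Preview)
The paper does not supply its own proof of this proposition; it is quoted from \cite{borges} without argument. Your recursion is correct and is precisely the standard Borges--Wehrly proof: at each stage you pass to the closed remainder $F_n=X_n\setminus W_{n-1}$, extract a kernel $D_n$ there, and the key observation that $D_m\cap W_n=\emptyset$ for $m>n$ makes $\bigcup_n D_n$ locally finite, hence closed discrete.
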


\begin{prop} \label{MAkov1}Suppose MA. Then for any locally countable, locally compact space $X$ of cardinality $<2^\omega$ the following are equivalent:
 \begin{enumerate}
 \setcounter{enumi}{-1}
 \item $X$ is $\sigma$-closed discrete,
 \item $X$ is a D-space,
  \item $X$ is a linearly D-space,
  \item $X$ satisfies $(*)$.
  \end{enumerate}
\end{prop}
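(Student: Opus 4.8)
The plan is to route all four conditions through $(0)$, proving the cycle $(0)\Rightarrow(1)\Rightarrow(2)\Rightarrow(0)$ together with the equivalence $(0)\Leftrightarrow(3)$; since $(0),(1),(2)$ then close into a loop and $(0)\Leftrightarrow(3)$, all four conditions coincide. The forward implications out of $(0)$, as well as $(1)\Rightarrow(2)$, are soft, whereas the two implications \emph{into} $(0)$ are where the hypothesis MA is actually used: both will be established by contraposition and reduced, through Balogh's dichotomy (Theorem \ref{balogh}), to a single structural analysis of a perfect preimage of $\omega_1$.

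For the soft part, $(0)\Rightarrow(1)$ follows from Proposition \ref{borges}: a closed discrete subspace is trivially a D-space (the whole space is a kernel for any ONA), so a $\sigma$-closed-discrete space is a countable union of closed D-subspaces. The implication $(1)\Rightarrow(2)$ is immediate from the definitions, since linear D-ness only asks for kernels of \emph{monotone} ONA's. For $(0)\Rightarrow(3)$, write $X=\bigcup_n X_n$ with each $X_n$ closed discrete; given $A\in[X]^\kappa$ with $\kappa$ regular uncountable, regularity forces $|A\cap X_n|=\kappa$ for some $n$, and $A\cap X_n$ is then a closed discrete subset of $A$ of full size, so $(*)$ holds via its second alternative.

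The substance lies in the two contrapositives. Assume $X$ is not $\sigma$-closed-discrete; by Theorem \ref{balogh} there is a closed subspace $Y\subseteq X$ and a perfect map $f\colon Y\to\omega_1$ onto $\omega_1$. Since $Y$ inherits local countability from $X$, every compact subset of $Y$ is countable; in particular each fibre $f^{-1}(\alpha)$ is countable, and $f^{-1}$ of any compact (hence countable) subset of $\omega_1$ is countable. The key lemma I would isolate — and the main obstacle — is that \emph{every closed discrete $B\subseteq Y$ is countable}. As $f$ is closed, $f[B]$ is closed in $\omega_1$; were it uncountable it would contain a point $\delta$ which is a limit from below of $f[B]$, and lifting an increasing sequence $\alpha_n\nearrow\delta$ in $f[B]$ to points $b_n\in B$ places $\{b_n:n<\omega\}$ inside the compact set $f^{-1}[\{\alpha_n:n<\omega\}\cup\{\delta\}]$. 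An accumulation point of $\{b_n:n<\omega\}$ (which exists by compactness) maps under $f$ to $\delta$, hence lies in the fibre over $\delta$, contradicting closed-discreteness of $B$. Thus $f[B]$ is countable, and since the fibres are countable, so is $B$.

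With this lemma available I would fix a selector $A=\{a_\alpha:\alpha<\omega_1\}\subseteq Y$ with $f(a_\alpha)=\alpha$, and show that the single set $A$ simultaneously refutes $(3)$ and $(2)$. It has no complete accumulation point: a point of $X\setminus Y$ is separated from $A$ by the open set $X\setminus Y$, while for $p\in Y$ the initial segment $[0,f(p)]$ is an open, compact neighbourhood of $f(p)$ in $\omega_1$, so its $f$-preimage is a countable neighbourhood of $p$ meeting $A$ countably. Any closed discrete subset of $X$ contained in $A$ is closed discrete in $Y$, hence countable by the lemma, so $A$ has no closed discrete subset of size $\omega_1$; this gives $\neg(3)$. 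For $\neg(2)$ I would check that $A$ violates condition (3) of Theorem \ref{linD}: for any closed discrete $B\subseteq X$, the set $B\cap Y$ is closed discrete in $Y$, hence countable and bounded by some $\gamma<\omega_1$, so $f^{-1}[[0,\gamma]]$ is an open subset of $Y$ containing $B\cap Y$ and meeting $A$ only countably; adjoining small neighbourhoods of the points of $B\setminus Y$ drawn from the open set $X\setminus Y$ yields a neighbourhood $U$ of $B$ with $|U\cap A|\le\omega$. Hence no closed discrete set is $\mathcal{U}$-big for $A$ in the sense of Theorem \ref{linD}(3), so $X$ is not linearly D. Once the key lemma and this uniform verification for the selector are in place, every implication in the stated equivalence follows.
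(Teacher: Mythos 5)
Your architecture is sound and genuinely different from the paper's. The paper proves only one hard implication, $(2)\Rightarrow(0)$, and does it abstractly: linear D-ness passes to closed subspaces, a countably compact linearly D space is compact, and a perfect preimage of $\omg$ is countably compact, non-compact, and (Claim \ref{ppi}(ii)) closed in the first-countable $X$ --- so a linearly D space can contain no such preimage, and Theorem \ref{balogh} finishes; the implication $(3)\Rightarrow(2)$ is delegated to Theorem \ref{linD}. You instead refute $(2)$ and $(3)$ simultaneously from $\neg(0)$ by a hands-on analysis of the preimage $Y$: the lemma that closed discrete subsets of $Y$ are countable, plus the selector $A=\{a_\alpha:\alpha<\omg\}$. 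This buys independence from the two facts about linearly D spaces that the paper uses without proof (closed-hereditariness, and countably compact $+$ linearly D $\Rightarrow$ compact), at the cost of a longer argument; your lifting proof of the key lemma and the verification that $A$ violates both $(*)$ and condition (3) of Theorem \ref{linD} are correct.

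There is, however, one genuine gap: you attribute to Theorem \ref{balogh} the assertion that $Y$ may be taken \emph{closed} in $X$, but the theorem as quoted only says $X$ \emph{contains} a perfect preimage of $\omg$, with no closedness claim. Your proof uses closedness essentially, in exactly two places: when you separate points of $X\setminus Y$ from $A$ by the open set $X\setminus Y$, and when you build the neighborhood $U=W\cup(X\setminus Y)$ of a closed discrete $B$; if $Y$ were not closed, points of $\overline{Y}\setminus Y$ (possibly belonging to $B$) would be left unhandled. Closedness is precisely what the paper's Claim \ref{ppi}(ii) supplies, and it requires an argument: $Y$ is countably compact (every countable subset of $Y$ maps into some compact $[0,\alpha]$, whose preimage is compact), $X$ is first countable (by local compactness, local countability and Hausdorffness each point has a countable compact, hence second countable, neighborhood), and countably compact subspaces of first countable Hausdorff spaces are closed. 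Note also that once countable compactness of $Y$ is on the table, your key lemma becomes immediate --- closed discrete subsets of countably compact spaces are finite --- so filling this gap would simultaneously shorten your argument. As it stands, the closedness of $Y$ must be proved, not quoted.
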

\begin{proof} The implications $(0)\Rightarrow (1)\Rightarrow (2)$ (by Proposition \ref{borges}) and $(0)\Rightarrow (3)$ are straightforward. $(3)$ implies $(2)$ by Theorem \ref{linD}. We only need to show $(2)\Rightarrow(0)$. Suppose that $X$ is linearly D, by Theorem \ref{balogh} we need to show that $X$ does not contain any perfect preimage of $\omg$.
\begin{clm}\label{ppi}\begin{enumerate}[(i)]
  \item If the space $F$ is a perfect preimage of $\omg$ then $F$ is countably compact, non compact.
  \item If $X$ is first-countable and $F\subseteq X$ is a perfect preimage of $\omega_1$ then $F$ is closed in $X$.
\end{enumerate}
\end{clm}
\begin{proof} (i) It is known that under perfect mappings, the preimage of a compact space is compact (see \cite[Theorem 3.7.2]{eng}). Take any countably infinite $A\subseteq F$ and perfect surjection $f:F\rightarrow \omg$. There is some $\alpha<\omg$ such that $f[A]\subseteq \alpha+1$. Thus $A$ is the subset of the compact set $f^{-1}[\alpha+1]$. (ii) is a consequence of (i).
\end{proof}
Suppose $F\subseteq X$ is a closed subspace. Then $F$ is linearly $D$, hence if $F$ is countably compact $F$ is compact either. By Claim \ref{ppi} $F$ cannot be a perfect preimage of $\omg$.
\end{proof}

\begin{center}
$\star$
\end{center}

Finally we give a regular counterexample to the problem in ZFC without any further set-theoretic assumptions.

\begin{thm} There exists a locally countable, locally compact, 0-dimensional $T_2$ D-space $X$ with cardinality $\cont$ such that $X$ does not satisfy $(*)$.
\end{thm}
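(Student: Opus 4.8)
The plan is to build on the same $\omega_1\times 2$ template used in the previous propositions, but now scaled up to size $\cont$ and arranged so that the failure of $(*)$ is forced by a genuinely ZFC object rather than by a guessing sequence. Concretely, I would take the underlying set to be $\cont\times 2$ (or rather a carefully chosen index set of size $\cont$), declare the "bottom" copy $A=(\cont)\times\{0\}$ to be discrete, and equip each "top" point $(\alpha,1)$ with a neighborhood base consisting of cofinite tails of some prescribed countable set $S_\alpha\subseteq A$ converging to it. As before, the top layer $\cont\times\{1\}$ will be closed discrete and the bottom layer will be discrete (hence trivially D), so Proposition \ref{Dunion} delivers that $X$ is a D-space for free; local countability, local compactness and 0-dimensionality will be immediate from the chosen neighborhood bases (finite intersections of the tails being clopen compact). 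The real content is choosing the family $\{S_\alpha\}$ so that \emph{every} subset $B\subseteq A$ of size $\cont$ accumulates somewhere in the top layer, thereby preventing any closed discrete subset of $A$ of full size and, since $A$ also has no complete accumulation point, witnessing the failure of $(*)$ at $\kappa=\cont$.

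The key step is therefore the combinatorial construction of the convergent sets $S_\alpha$. Since MA kills the cardinality-below-$\cont$ examples (Proposition \ref{MAkov1}), the point of pushing the cardinality up to exactly $\cont$ is that here we can use a ZFC enumeration argument. I would enumerate the $\cont$-sized subsets of $A$ that we must catch; there are $2^{\cont}$ of them in general, so a naive bookkeeping over the top layer of size $\cont$ will not suffice, and I expect this to be the main obstacle. The fix is to aim at a single \emph{reusable} family: rather than guessing each large set by a dedicated point, I would use an almost disjoint family, or more efficiently an independent family, on a countable base set inside $A$ so that a single top point's convergent sequence meets a prescribed large set — the combinatorics of independent/almost disjoint families on $\omega$ of size $\cont$ are exactly what let a structure of size $\cont$ interact with all of its $\cont$-sized subsets in ZFC.

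Thus the technical heart will be an arithmetic/counting lemma guaranteeing that for every $B\in[A]^{\cont}$ there is some $\alpha$ with $|S_\alpha\cap B|=\omega$, which forces $(\alpha,1)\in\overline{B}$ and breaks closed discreteness. I would organize $A$ as $\cont$ many disjoint countable blocks, attach to each block a copy of a fixed nontrivial convergent structure (so every $\cont$-sized $B$ must put infinitely many points into some single block along a sequence that our neighborhood base sees), and verify by a pigeonhole over the $\cont$ blocks that any set of size $\cont$ is captured. The routine verifications I would defer are: that the stated bases really generate a $T_2$, 0-dimensional, locally compact, locally countable topology; that $\cont\times\{1\}$ is closed discrete (each top point's neighborhoods miss the other top points); and the final bookkeeping that $A$ has no complete accumulation point (its only accumulation points lie in the top layer, each of countable character). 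Assembling these, Proposition \ref{Dunion} gives D-ness and the capture lemma gives the failure of $(*)$, completing the theorem.
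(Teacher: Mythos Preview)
Your outline correctly identifies the template (two layers, bottom discrete, top closed discrete, Proposition~\ref{Dunion} gives D-ness) and correctly isolates the real issue: arranging that every large subset of the bottom layer accumulates to some top point. But the mechanism you propose for this step does not work.

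The pigeonhole argument over $\cont$ many disjoint countable blocks fails outright: a transversal $B$ choosing one point from each block has size $\cont$ yet meets every block---and hence every convergent sequence living inside a block---in at most one point, so $B$ is closed discrete and $(*)$ does not fail. More generally, $\cont$ points distributed over $\cont$ countable blocks need not put infinitely many into any single block, so no ``attached convergent structure'' inside a block will see $B$. Your fallback suggestion of an almost disjoint or independent family on a countable base set is too vague to repair this: a MAD family on $\omega$ only guarantees that every infinite subset of that fixed countable set meets some member infinitely, which says nothing about an arbitrary $\cont$-sized subset of a $\cont$-sized $A$. You have correctly diagnosed the obstruction ($2^{\cont}$ targets, only $\cont$ top points) but have not found a way around it.

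The paper's device is to take the bottom layer to be $\mathbb{R}$ with its Euclidean structure remembered on the side. Any uncountable $B_0\subseteq\mathbb{R}$ has a closed dense-in-itself set $C$ inside its set of Euclidean accumulation points, and then some countable $Q\subseteq B_0$ is Euclidean-dense in $C$. There are only $\cont$ many such pairs $(C,Q)$, so one can enumerate them in type $\cont$ and, at stage $\delta$ handling the pair $(C_\alpha,Q_\alpha^\beta)$, pick a fresh point $x_\delta\in C_\alpha$ and declare $(x_\delta,1)$ to be the limit of a sequence from $Q_\alpha^\beta$ converging to $x_\delta$ in the Euclidean sense. This is the missing idea: the Cantor--Bendixson analysis reduces the $2^{\cont}$ many uncountable targets to $\cont$ many canonical witnesses, after which straightforward bookkeeping over the top layer suffices.
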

\begin{proof}Let $\{C_\alpha: \alpha < \cont\}$ denote an enumeration of the closed dense in itself subsets of $\re$. Let $\{Q_\alpha^\beta:\beta<\cont\}$ denote an enumeration of all countable subsets of $\re$ such that $C_\alpha\subseteq \overline{Q_\alpha^\beta}$ (Euclidean closure taken). Enumerate the pairs $(\alpha,\beta)$ from $\cont \times \cont$ in order type $\cont$: $\{p_\delta:\delta<2^\omega\}$. We define a topology on $X=\re\times 2$ as follows. Let $\re \times \{0\}$ be discrete and we define the topology on $\re \times \{1\}$ by induction. In step $\delta$ for $p_\delta=(\alpha, \beta)$, pick a point $x_\delta\in C_\alpha\setminus \{x_{\delta'}:\delta'<\delta\}$ and let $(x_\delta,1)$ have the following neighborhood base:
$$\bigl\{\{(x_\delta,1)\}\cup\{(x_\delta^n,0):n\geq m\}:m<\omega\bigr\}$$
where $\{x_\delta^n:n<\omega\}\subseteq Q_\alpha^\beta\setminus\{x_\delta\}$ is  any sequence converging to $x_\delta$ in the Euclidean sense. Let the remaining points $(\re\setminus \{x_\delta:\delta<\cont\})\times \{1\}$ be discrete. Clearly, this gives us a locally countable, locally compact, 0-dimensional $T_2$ space. $\re\times\{1\}$ is closed discrete and $\re\times\{0\}$ is discrete, hence a D-space. Thus  $X$ is a D-space by Proposition \ref{Dunion}.

We claim that there is no uncountable subset of $A=\re\times \{0\}\subseteq X$ such that it is closed discrete in $X$ with this topology; this implies that $X$ does not satisfy $(*)$. Let $B\in [A]^{\omg}$ and $B_0=\{x\in\re:(x,0)\in B\}$. Then there is $\alpha<\cont$ such that $C_\alpha\subseteq B_0'$ (where $B_0'$ denotes the Euclidean accumulation points of $B_0$) and $\beta<\cont$ such that $Q_\alpha^\beta\subseteq B$. By definition in step $\delta$ where $p_\delta=(\alpha, \beta)$, we defined the topology on $X$ in such way that $(x_\delta,1)$ is in the closure of $Q_\alpha^\beta\times\{0\}$ thus in the closure of $B$. Thus $B$ is not closed in $X$.
\end{proof}

\section{On Problem 2.6 from \cite{linD}}

Now our aim is to answer Problem 2.6 in negative. For this we will say that a space \emph{$X$ satisfies $(**)$} iff
 \begin{itemize}
 \item[$(**)$]for every regular, uncountable cardinal $\kappa$ and $A\in [X]^{\kappa}$ there is a full accumulation point of $A$ or there is $D\in [\overline{A}]^{\kappa}$ which is closed discrete in $X$.
 \end{itemize}
 Problem 2.6 can be rephrased as whether $(**)$ implies linearly D. We prove the following:
\begin{enumerate}
  \item The existence of a locally countable, locally compact, non linearly D space $X$ with cardinality $<\cont$ which satisfies $(**)$ is independent of ZFC.
  \item There is a locally countable, locally compact, 0-dimensional Hausdorff space $X$ with cardinality $\cont$ which satisfies $(**)$ however not linearly D.
\end{enumerate}

We will use the following notations in this section. $\{A_\alpha:\alpha\in\lime\}$ denotes a \emph{$\clubsuit$-sequence}: for every $\alpha<\omg$ $A_\alpha\subseteq \alpha$ is an $\omega$-type sequence converging to $\alpha$ and for every $X\in[\omg]^{\omg}$ there is some $\alpha<\omg$ such that $A_\alpha\subseteq X$. \emph{Supposing $\clubsuit$} means that there is a $\clubsuit$-sequence. For every $\alpha\in\omg$ enumerate increasingly $A_\alpha$ as $\{a_\alpha^n:n\in\omega\}$. Let $\{M^\beta:\beta\in\omg\}\subseteq [\omega]^\omega$ be an arbitrary almost disjoint family on $\omega$.

\begin{thm} \label{counter} Suppose $\clubsuit$. Then there is a 0-dimensional $T_2$ space $X$ of cardinality $\omg$ such that $X$ is not linearly D, however satisfies $(**)$.
\end{thm}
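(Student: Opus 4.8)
The plan is to realize $X$ on a set of size $\omg$ split into a \emph{bottom} copy $B$ of $\omg$, which will be the set witnessing the failure of linear D-ness, together with a family of \emph{top} points attached along the guessing sequence. For each $\alpha\in\lime$ and each $\beta<\omg$ I would introduce a point $t_\alpha^\beta$ whose basic neighbourhoods are $\{t_\alpha^\beta\}\cup\{a_\alpha^n:n\in M^\beta,\ n\ge m\}$ ($m<\omega$), with the ladder points taken in $B$. Two features are forced by this choice: every neighbourhood of $t_\alpha^\beta$ meets $B$ in an infinite subset of the single \emph{countable} ladder $A_\alpha$, and, because $\{M^\beta:\beta<\omg\}$ is almost disjoint, the traces of distinct top points over the same $\alpha$ are almost disjoint while their union stays inside $A_\alpha$. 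Declaring these neighbourhoods clopen and keeping the points of $B$ with small clopen neighbourhoods yields a $0$-dimensional $T_2$ space of cardinality $\omg$ in which every point has a neighbourhood meeting any given set in a countable set; in particular \emph{no} subset of $X$ has a complete accumulation point. By \cite[Proposition 2.4]{linD} this reduces linear D-ness to the disjoint-trace dichotomy, and the two goals become: (for $(**)$) every $A\in[X]^{\omg}$ has an uncountable closed discrete set inside $\overline A$; (against linear D) some $A^\ast$ admits no closed discrete $D\subseteq X$ of size $\omg$ carrying a pairwise disjoint family $\{A_d:d\in D\}\subseteq\mathcal P(A^\ast)$ with $d\in\overline{A_d}$.

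For $(**)$ I would use the columns. Given $A\in[X]^{\omg}$, if $A$ meets the top in an uncountable set I would extract a closed discrete witness from the top directly; otherwise $A\cap B$ is uncountable and $\clubsuit$ produces $\alpha$ with $A_\alpha\subseteq A\cap B$. Then the whole column $\{t_\alpha^\beta:\beta<\omg\}$ lies in $\overline{A_\alpha}\subseteq\overline A$, is uncountable, and is closed discrete because each of its points has a neighbourhood disjoint from the rest of the top while the bottom is handled by local smallness. Crucially this closed discrete set concentrates on the countable ladder $A_\alpha$, so it does \emph{not} by itself furnish disjoint traces. Thus $(**)$ is cheap, and it is the second goal that carries the content.

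For the failure of linear D I would take $A^\ast=B$ and, invoking Theorem \ref{linD}, show that every closed discrete $D\subseteq X$ can be shrunk off $B$, i.e.\ admits an open $U\supseteq D$ with $|U\cap B|\le\omega$. The bottom part $D\cap B$ is easily made countable, and over each single $\alpha$ the contribution of the top points of $D$ stays inside the countable set $A_\alpha$; hence the obstruction is precisely a closed discrete $D$ whose top points range over \emph{uncountably many} ladders $A_\alpha$, since then the union of their neighbourhood traces is cofinal and uncountable in $B$, and such a $D$ also disjointifies by the usual recursion (choose $\alpha_\xi$ above the countable set already used and take a tail of $A_{\alpha_\xi}$).

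This is the main obstacle, and it is exactly where the construction must do more than the naive description above: the disjointifying recursion for spread-out ladders must be blocked, which forces me to refine the topology so that \emph{no} uncountable closed discrete set spreads over uncountably many ladders --- equivalently, to arrange enough cross-ladder accumulation that every such transversal acquires a limit point --- while simultaneously preserving the single-ladder columns that secured $(**)$. Reconciling these opposite pulls is the heart of the matter: closed discrete sets in closures (needed for $(**)$) versus the absence of any realizable disjoint family (needed against linear D). I would carry this out by a transfinite recursion of length $\omg$ that, using $\clubsuit$ to enumerate and kill every candidate disjoint-trace family --- equivalently every candidate $\mathcal U$-big closed discrete set for the monotone cover of Theorem \ref{linD}(2) --- introduces accumulation across ladders at each step, taking care that every accumulation point added for this purpose again converges through a countable, almost-disjointly indexed trace, so that it can never be recycled into new disjoint traces. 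Verifying that this bookkeeping terminates without destroying the columns, i.e.\ that the guessing can annihilate all transversals yet leave the closure of every uncountable set rich in concentrated closed discrete subsets, is the step I expect to require the most care.
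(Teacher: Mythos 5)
Your proposal correctly isolates the crux of the problem but does not resolve it, and the space you actually describe fails to do the job. With the bottom points isolated and each $t_\alpha^\beta$ converging through $\{a_\alpha^n : n\in M^\beta\}$, the entire top layer is closed discrete, so a transversal $D=\{t_{\alpha_\xi}^{0}:\xi<\omg\}$ over uncountably many ladders is a closed discrete set every neighbourhood of which meets $B$ in a set that is cofinal in $\omg$, hence of size $\omg$ by regularity. Combined with $\clubsuit$ (which places some $A_\alpha$ inside any uncountable $Y\subseteq B$, so that a transversal over $\{\alpha: A_\alpha\subseteq Y\}$ is big for $Y$), this shows that your naive space actually satisfies condition (3) of Theorem \ref{linD}: it \emph{is} linearly D. Everything therefore rests on your final paragraph, where you propose a transfinite recursion that ``kills every candidate transversal'' by adding cross-ladder accumulation points; but there are $2^{\omg}$ candidate closed discrete transversals and only $\omg$ stages, no guessing principle for such families is specified, and you give no argument that the added accumulation points preserve $T_2$, $0$-dimensionality, or the closed discrete columns needed for $(**)$, nor that the process closes off. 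That is precisely the content of the theorem, and it is missing.

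The paper's construction dissolves this tension rather than fighting it. The underlying set is the square $\omg\times\omg$ with columns $X_\alpha=\{\alpha\}\times\omg$, and a limit-level point $(\alpha,\beta)$ gets the neighbourhoods $\{(\alpha,\beta)\}\cup\bigcup\{I_\alpha^n\times\omg: n\in M^\beta\setminus E\}$, where the $I_\alpha^n$ are the intervals between consecutive points of $A_\alpha$. Because these neighbourhoods swallow whole \emph{uncountable} blocks $I_\alpha^n\times\omg$ rather than just the ladder points, one obtains Claim \ref{treffes}: \emph{any} set $A$ whose first-coordinate projection $\pi(A)$ is uncountable has stationarily many full columns inside $A'$. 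Applied to a would-be big closed discrete set this forces $\pi(D)$ to be countable, so $D\subseteq\alpha\times\omg$ for some $\alpha$ and the monotone cover $\{\alpha\times\omg:\alpha<\omg\}$ has no big closed discrete set; applied to an arbitrary uncountable $A$ it hands you a closed discrete column $X_\alpha\subseteq\overline{A}$ for $(**)$ (the case $|A\cap X_\alpha|=\omg$ being trivial since each column is closed discrete). In short, the cross-ladder accumulation you recognize as necessary is built into the basic neighbourhoods from the outset, and a single application of the guessing sequence serves both halves of the theorem; no second recursion is needed.
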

\begin{proof} First we introduce some further notations for the intervals between the points in the $A_\alpha$'s. For each $\alpha\in\omg$ let $\{I_\alpha^n:n\in\omega\}$ denote the following disjoint open sets in $\omg$:  $I_\alpha^0=(0,a_\alpha^0]$ and $I_\alpha^{n+1}=(a_\alpha^n,a_\alpha^{n+1}]$ for $n\in\omega$. We will define a topology on $X=\omg\times \omg$. Let $\{\alpha\}\times\omg$ be discrete for successor $\alpha$. For $\alpha\in \lime$ and $\beta\in\omg$, a neighborhood base for the point $(\alpha,\beta)$ consists of sets:
$$U((\alpha,\beta),E)=\{(\alpha,\beta)\}\cup\bigcup\{I_\alpha^n\times\omg:n\in M^\beta\setminus E\}$$
where $E\in[\omega]^{<\omega}$. Observe that if $\beta,\beta'\in\omg$ and $\beta\neq\beta'$ then $E=M^\beta\cap M^{\beta'}$ is finite, thus $U((\alpha,\beta),E)\cap U((\alpha,\beta'),E)=\emptyset$.
This way we defined a 0-dimensional $T_2$ topology. Note that the set $X_\alpha=\{\alpha\}\times \omg$ for $\alpha \in \omg$ is closed discrete. Let $\pi(A)=\{\alpha\in\omg:A\cap X_\alpha\neq \emptyset\}$ for $A\subseteq X$.

\psfrag{a}{$\alpha$}
\psfrag{ab}{$(\alpha,\beta)$}
\psfrag{om}{$\omg$}
\psfrag{Xa}{$X_\alpha$}
\psfrag{an}{\tiny{$a_\alpha^n$}}
\psfrag{an1}{\tiny{$a_\alpha^{n+1}$}}
\psfrag{I}{\tiny{$I_\alpha^{n+1}\times\omg$}}
\begin{center}
    \includegraphics[keepaspectratio, width=5 cm]{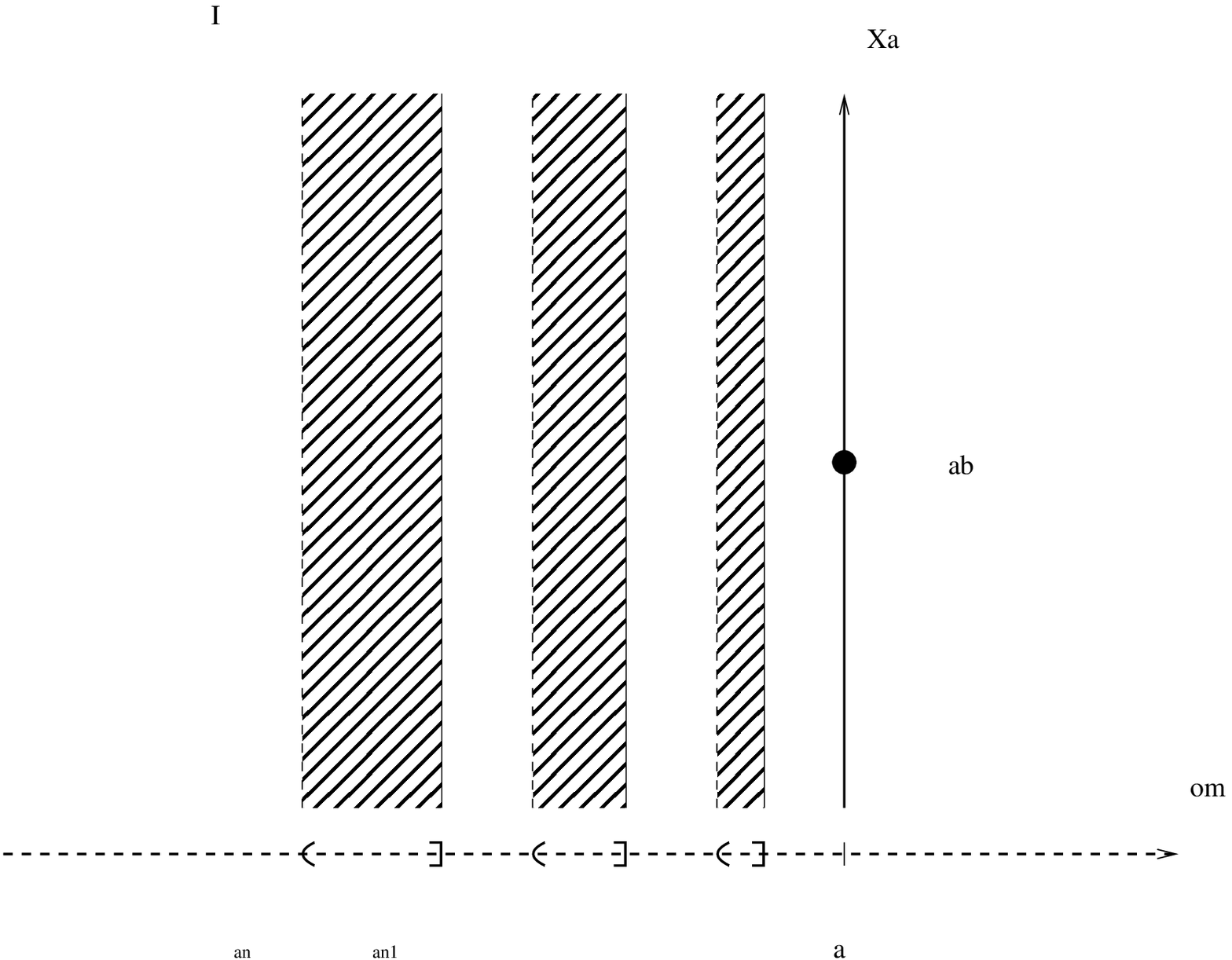}
\end{center}

\begin{clm} \label{treffes}If $|\pi(A)|=\omg$ for $A\subseteq X$ then there are stationary many $\alpha\in\omg$ such that $X_\alpha\subseteq A'$.
\end{clm}
\begin{proof} Since $\{A_\alpha:\alpha\in\lime\}$ is a $\clubsuit$-sequence, the set $S=\{\alpha\in\omg:A_\alpha\subseteq \pi(A)\}$ is stationary. For $\alpha\in S$ we clearly have $X_\alpha\subseteq A'$, since by definition for any $U$ neighborhood of any point $(\alpha,\beta)\in X_\alpha$, $U$ intersects $A$ in infinitely many points.
\end{proof}
This claim has the following corollaries.
\begin{clm} $X$ satisfies $(**)$.
\end{clm}
\begin{proof}Let $A\in [X]^{\omg}$. If there is an $\alpha\in\omg$ such that $|A\cap X_\alpha|=\omg$ we are done. Otherwise for all $\alpha\in\omg$ we have $|A\cap X_\alpha|\leq \omega$ so $|\pi(A)|=\omg$. By Claim \ref{treffes} there is an $\alpha\in\lime$ such that $X_\alpha\subseteq \overline{A}$ and $X_\alpha$ is closed discrete.
\end{proof}
\begin{clm}$X$ is not linearly D.
\end{clm}
\begin{proof}Suppose that $D\subseteq X$ is closed discrete. Then $\pi(D)$ is countable by Claim \ref{treffes}. Hence there is no closed discrete set which is big for the open cover $\{\alpha\times\omg:\alpha<\omg\}$. Thus $X$ is not linearly $D$ by Theorem \ref{linD}.
\end{proof}
This completes the proof of this theorem.
\end{proof}

\textbf{Remark:} If we modify the neighborhoods to be the following for $(\alpha,\beta)$ where $\beta\in\omg$ and $\alpha\in \lime$:
$$\bigl\{\{(\alpha,\beta)\}\cup\bigcup\{I_\alpha^n\times [0,\beta]:n\in M^\beta\setminus E\}:E\in[\omega]^{<\omega}\bigr\}$$
then we obtain a topology which is locally countable, not linearly $D$, satisfies $(**)$ however not even regular.\\

With some further set-theoretic assumptions we can improve the above construction.

\begin{thm}Suppose $\clubsuit$ and CH (equivalently $\diamondsuit$). Then there is a locally countable, locally compact, 0-dimensional $T_2$ space $X$ of cardinality $\omg(=2^\omega)$ such that $X$ is not linearly D, however satisfies $(**)$.
\end{thm}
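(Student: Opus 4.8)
The plan is to build the required space on the same underlying set $X=\omg\times\omg$ as in Theorem \ref{counter}, retaining the fibers $X_\alpha=\{\alpha\}\times\omg$, the $\clubsuit$-ladders $A_\alpha$ with their intervals $I_\alpha^n$, and the almost disjoint family $\{M^\beta:\beta\in\omg\}$ whose sole job is to Hausdorff-separate the points inside a single fiber. As before I would extract non-linear-D-ness and $(**)$ from an analogue of Claim \ref{treffes}: whenever $|\pi(A)|=\omg$ there are stationarily many limit $\alpha$ with $X_\alpha\subseteq\overline{A}$. Granting this, the monotone open cover $\{[0,\alpha)\times\omg:\alpha<\omg\}$ (its members are open because each point's neighbourhood will lie over an initial segment) admits no closed discrete big set, so $X$ is not linearly D by Theorem \ref{linD}, while $(**)$ follows exactly as in Theorem \ref{counter}. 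The genuinely new demand is local compactness, and this is where CH enters: together with $\clubsuit$ it is equivalent to $\diamondsuit$, which I would use as a guessing principle.

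First I would isolate the mechanism for local compactness. In a $0$-dimensional $T_2$ space a point $x$ has a compact neighbourhood as soon as it has one of the shape $\{x\}\cup\bigcup_n K_n$, where the $K_n$ are pairwise disjoint compact open sets converging to $x$ (every neighbourhood of $x$ contains all but finitely many $K_n$); such a set is compact by the one-point-compactification pattern. Thus I would define the topology by recursion on $\alpha<\omg$ so that at each stage every point already introduced carries a compact open neighbourhood of this shape, assembled from compact open sets built earlier. Successor fibers are left discrete; for a limit $\alpha$ the point $(\alpha,\beta)$ is declared the limit of a sequence $\langle K_\alpha^n:n\in M^\beta\rangle$ of previously constructed disjoint compact open sets, with $K_\alpha^n$ lying over the interval $I_\alpha^n$. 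Almost disjointness of the $M^\beta$ separates distinct points of $X_\alpha$ just as before, and $0$-dimensionality, local countability, the Hausdorff property and local compactness are preserved inductively.

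The heart of the argument, and the reason $\clubsuit$ alone no longer suffices, is that a compact neighbourhood of $(\alpha,\beta)$ is necessarily thin: it can meet only the chosen pieces $K_\alpha^n$, not the full columns $I_\alpha^n\times\omg$. A fixed thin neighbourhood therefore cannot be expected to catch an arbitrary $A$ with $A_\alpha\subseteq\pi(A)$, so the clean stationary reflection of Claim \ref{treffes} is no longer automatic. The fix is to guess $A$ in advance. I would derive from $\diamondsuit$ both the ladder $\clubsuit$-sequence and a coordinated guessing sequence $\{S_\alpha:\alpha<\omg\}$ predicting subsets of $X$ (under a bijection with $\omg$), and at stage $\alpha$, whenever $A_\alpha\subseteq\pi(S_\alpha)$, I would pick each $K_\alpha^n$ so as to meet $S_\alpha$ over $I_\alpha^n$ (possible since $a_\alpha^n\in A_\alpha\subseteq\pi(S_\alpha)$ forces $S_\alpha$ to meet $X_{a_\alpha^n}\subseteq I_\alpha^n\times\omg$). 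Since $\diamondsuit$ guesses every $D\subseteq X$ correctly on a stationary set, for closed discrete $D$ with uncountable $\pi(D)$ one finds $\alpha$ with $X_\alpha\subseteq\overline{D}=D$ whose fiber points are non-isolated in $D$, a contradiction; hence $\pi(D)$ is countable, reproving the corollaries of Claim \ref{treffes} that yield both non-linear-D-ness and $(**)$.

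The main obstacle I anticipate is reconciling the two opposing pressures on the pieces $K_\alpha^n$: they must be thin enough that $\{(\alpha,\beta)\}\cup\bigcup_{n\in M^\beta}K_\alpha^n$ is genuinely compact (each $K_\alpha^n$ a previously built compact open set, the sequence truly converging to the point), yet chosen adaptively to meet the guessed $S_\alpha$ so that the entire fiber accumulates it. Verifying that a single recursive choice of the $K_\alpha^n$ serves all $\beta\in\omg$ simultaneously while respecting the almost-disjoint pattern, that the resulting neighbourhoods remain compact and the fibers stay closed discrete, and that the reflection claim survives with the thinner neighbourhoods, is the delicate bookkeeping that the construction must carry out.
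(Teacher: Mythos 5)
Your plan is viable but takes a genuinely different route from the paper, and as written it has one concrete gap. The paper does not guess the set $A$ at all: it uses CH only to enumerate all functions $F_\delta:\omega\rightarrow\omg$ and a bijection $h:\omg\rightarrow\omg\times\omg$, and at a limit stage $\alpha$ it threads the compact clopen pieces $G_n$ of the neighbourhood of $(\alpha,\beta)$, where $h(\beta)=(\delta,\gamma)$, through the points $(a_\alpha^n,F_\delta(n))$. Thus \emph{every} possible selector of points from the columns over the ladder $A_\alpha$ is realized by $\omg$ many points of the fibre $X_\alpha$; given $A$ with $A_\alpha\subseteq\pi(A)$ one reads off a selector $F$ with $(a_\alpha^n,F(n))\in A$, finds $\delta$ with $F=F_\delta$, and concludes $|X_\alpha\cap A'|=\omg$ (Claim \ref{treffes2}). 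The only guessing is the $\clubsuit$-guess of $\pi(A)$. Your scheme instead guesses $A$ itself by $\diamondsuit$ and adapts the pieces $K_\alpha^n$ to the guess; this buys the stronger conclusion $X_\alpha\subseteq\overline{A}$ at the guessed stages, at the price of an extra guessing device. Your local-compactness mechanism is exactly the paper's one-point-compactification pattern, and the deferred bookkeeping does work out (an old compact open set meets only finitely many of the clopen strips $I_\alpha^n\times\omg$, hence only finitely many $K_\alpha^n$, so it stays clopen after the new points are added).

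The gap is the coordination you allude to but do not supply: your reflection claim needs a single $\alpha$ at which the $\diamondsuit$-guess of $A$ is correct \emph{and} $A_\alpha\subseteq\pi(S_\alpha)$. Each event occurs on a stationary set, but two stationary sets need not meet, so with a $\clubsuit$-sequence and a $\diamondsuit$-sequence chosen independently the conclusion ``one finds $\alpha$ with $X_\alpha\subseteq\overline{D}$'' does not follow. The standard repair is to abandon the fixed ladders and derive the ladder from the guess: fixing a bijection $e:X\rightarrow\omg$, at stage $\alpha$ check whether $\pi(e^{-1}[S_\alpha])$ is cofinal in $\alpha$, and if so choose the ladder $a_\alpha^n$ inside $\pi(e^{-1}[S_\alpha])$ and thread $K_\alpha^n$ through a point of $e^{-1}[S_\alpha]$ in the column $X_{a_\alpha^n}$. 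For any $A$ with $|\pi(A)|=\omg$, the set of $\alpha$ where $e[A]\cap\alpha=S_\alpha$ is stationary and the set of $\alpha$ where $\pi(A\cap e^{-1}[\alpha])$ is cofinal in $\alpha$ contains a club, so their intersection is stationary and the claim goes through. A second, smaller point in the same vein: $\diamondsuit$ guesses $e[A]\cap\alpha$, not $A\cap X_{<\alpha}$ (which has size $\omg$ for every $\alpha\geq 1$), so the construction must work with the countable trace $A\cap e^{-1}[\alpha]$ rather than with $A\cap X_{<\alpha}$; this is harmless but needs to be said.
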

\begin{proof} For our construction we will need an enumeration of the functions $\omega\rightarrow\omg$ as $\{F_\delta:\delta<\omg\}$; here we used CH. Let $h$ be an arbitrary bijection $h:\omg\rightarrow\omg\times\omg$. We define a topology on $X=\omg\times\omg$. Let $X_\alpha=\{\alpha\}\times \omg$, $X_{<\alpha}=\alpha\times \omg$. Define neighborhoods for points in $X_\alpha$ by induction on $\alpha$. Let $(X_{<\alpha},\tau_{<\alpha})$ denote the topology defined by the induction till step $\alpha$. We have the following conditions which we will preserve during each step:
\begin{enumerate}[(i)]
  \item $(X_{<\alpha},\tau_{<\alpha})$ is locally countable, locally compact, 0-dimensional,
   \item $X_{<\beta}$ is open in $X_{<\alpha}$ for $\beta<\alpha$,
   \item for every $\beta<\alpha$ and  $(\beta,\gamma)\in X_{\beta}$ there is some neighborhood $G$ of  $(\beta,\gamma)$ such that $G\setminus\{(\beta,\gamma)\}\subseteq X_{<\beta}$,
  \item for every $\beta'<\beta<\alpha$ the set $(\beta',\beta]\times\omg$ is clopen in $(X_{<\alpha},\tau_{<\alpha})$.
\end{enumerate}
This way we will get a topology $\tau$ on $X$ by taking $\cup\{\tau_{<\alpha}:\alpha<\omg\}$ as a base.

For successor $\alpha<\omg$ just let $(\alpha,\beta)$ be a discrete point for any $\beta<\omg$. Clearly this way the inductional hypothesis will hold. Suppose now that $\alpha$ is limit, we define a neighborhood for $(\alpha,\beta)$ (for any $\beta<\omg$) as follows. Suppose that $h(\beta)=(\delta, \gamma)$. For each $n\in\omega$ take a countable, compact, clopen $G_n\subseteq (a_\alpha^{n-1},a_\alpha^n]\times\omg$ such that $(a_\alpha^n,F_\delta(n))\in G_n$; this can be done by (i) and (iv). Then define the neighborhoods of $(\alpha,\beta)$ by the following base:
$$\bigl\{\{(\alpha,\beta)\}\cup\bigcup\{G_n:n\in M^\beta\setminus E\}:E\in[\omega]^{<\omega}\bigr\}.$$
It is clear that the inductional assumptions will hold for the resulting topology. It follows from the construction that $(X,\tau)$ is locally countable, locally compact and 0-dimensional. Thus we constructed a space $(X,\tau)$ which refines the topology from the previous theorem, hence $X$ is $T_2$ either.  Let $\pi(A)=\{\alpha\in\omg:A\cap X_\alpha\neq \emptyset\}$ for $A\subseteq X$.
\begin{clm} \label{treffes2}If $|\pi(A)|=\omg$ for $A\subseteq X$ then there are stationary many $\alpha\in\omg$ such that $|X_\alpha\cap A'|=\omg$.
\end{clm}
\begin{proof} Since $\{A_\alpha:\alpha\in\lime\}$ is a $\clubsuit$-sequence, the set $S=\{\alpha\in\omg:A_\alpha\subseteq \pi(A)\}$ is stationary. Fix an $\alpha\in S$. Define $F:\omega\rightarrow \omg$ such that $(a_\alpha^n,F(n))\in A$ where $A_\alpha=\{a_\alpha^n:n\in\omega\}$. So there is some $\delta$ for which we have $F_\delta=F$. We claim that $\{(\alpha,\beta)\in X_\alpha:\exists\gamma<\omg:h(\beta)=(\delta,\gamma)\}\subseteq A'$. Clearly for such an $(\alpha,\beta)$ we used $F_\delta=F$ in the induction to define the neighborhoods, from which we see that the set $\{(a_\alpha^n,F(n)):n\in\omega\}\subseteq A$ accumulates to $(\alpha,\beta)$.
\end{proof}
\begin{clm} $X$ satisfies $(**)$.
\end{clm}
\begin{proof}Let $A\in[X]^{\omg}$. If there is an $\alpha\in\omg$ such that $|A\cap X_\alpha|=\omg$ we are done. Otherwise since $|A\cap X_\alpha|\leq \omega$ we have $|\pi(A)|=\omg$. By Claim \ref{treffes2} $\overline{A}$ intersects (stationary) many closed discrete sets $X_\alpha$,  in $\omg$ many points.
\end{proof}
\begin{clm}$X$ is not linearly D.
\end{clm}
\begin{proof}Suppose that $D\subseteq X$ is closed discrete. Then $\pi(D)$ is countable by Claim \ref{treffes2}. Hence there is no closed discrete set which is big for the open cover $\{\alpha\times\omg:\alpha<\omg\}$. Thus $X$ is not linearly $D$ by Theorem \ref{linD}.
\end{proof}
This completes the proof of this theorem.
\end{proof}

We can further extend the equivalences of Proposition \ref{MAkov1} using Theorem \ref{balogh}.

\begin{prop} Suppose MA. Suppose that the space $X$ is locally countable, locally compact of cardinality less than $\cont$. Then the following are equivalent:
\begin{enumerate}
  \setcounter{enumi}{0}
  \item $X$ is (linearly) D,
  \setcounter{enumi}{3}
  \item $X$ satisfies $(**)$.
\end{enumerate}
\end{prop}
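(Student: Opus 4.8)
The plan is to observe that Proposition \ref{MAkov1} already establishes the equivalence of conditions (0)--(3), so all that remains is to slot the new condition $(**)$ (condition (4)) into that chain. Concretely, I would prove the two implications ``(linearly) D $\Rightarrow (**)$'' and ``$(**)\Rightarrow$ (linearly) D'', after which the full equivalence follows from \ref{MAkov1}.

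The forward implication is essentially free. If $X$ is linearly D then, by Proposition \ref{MAkov1}, $X$ satisfies $(*)$, and $(*)$ trivially implies $(**)$: a complete (full) accumulation point witnesses the first clause of both, and a closed discrete subset of $A$ of size $|A|$ is in particular a closed discrete subset of $\overline{A}$ of size $|A|$, witnessing the second clause of $(**)$.

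For the converse I would argue by contraposition: assume $X$ is not linearly D and build a set refuting $(**)$. By Proposition \ref{MAkov1}, $X$ is then not $\sigma$-closed discrete, so Balogh's dichotomy (Theorem \ref{balogh}) produces a subspace $F\subseteq X$ that is a perfect preimage of $\omg$. First I would record three properties of $F$. By Claim \ref{ppi}(i) it is countably compact and non-compact. Since $X$ is locally compact, locally countable (and Hausdorff) it is first countable, so Claim \ref{ppi}(ii) shows that $F$ is closed in $X$. Finally, fixing a perfect surjection $f\colon F\to\omg$, each fibre $f^{-1}(\alpha)$ is compact, hence a countable subset of the locally countable space $X$; thus $|F|\le\omg\cdot\omega=\omg$, and surjectivity gives $|F|=\omg$.

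The last step is to apply $(**)$ with $\kappa=\omg$ and $A=F$ and check that both alternatives fail. There is no full accumulation point of $A$, because $X$ is locally countable: every point has a countable neighborhood, which meets the uncountable set $A$ in fewer than $\omg$ points. And there is no $D\in[\overline{A}]^{\omg}$ closed discrete in $X$: since $F$ is closed we have $\overline{A}=F$, so any such $D$ would be a closed discrete subspace of the countably compact space $F$ and hence finite, contradicting $|D|=\omg$. Therefore $X$ fails $(**)$, which finishes the contrapositive. The main obstacle is the production and analysis of $F$ --- verifying closedness via first countability and computing $|F|=\omg$; once $F$ is in hand as a closed, countably compact, non-compact set of size $\omg$, both clauses of $(**)$ collapse immediately from local countability and countable compactness.
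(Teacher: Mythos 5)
Your proposal is correct and follows essentially the same route as the paper: the forward direction reduces to $(*)\Rightarrow(**)$ via Proposition \ref{MAkov1}, and the converse is the paper's argument run in contrapositive form, combining Theorem \ref{balogh} with Claim \ref{ppi} and the observation that local countability rules out complete accumulation points while countable compactness rules out infinite closed discrete subsets. The extra details you supply (closedness of $F$ via first countability, $|F|=\omg$) are correct and consistent with how the paper uses these facts in Proposition \ref{MAkov1}.
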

\begin{proof} (1) implies (4) trivially. Suppose (4), then a closed uncountable subspace of $X$ cannot be countably compact. Then by Claim \ref{ppi} there is no perfect preimage of $\omg$ in $X$. By Theorem \ref{balogh} this implies that $X$ is $\sigma$-closed-discrete, hence (linearly) D.
\end{proof}

\begin{center}
$\star$
\end{center}

Under ZFC, without any further set-theoretic assumptions we can give a counterexample.

\begin{thm} There is a locally countable, locally compact, 0-dimensional $T_2$ space $X$ such that $X$ is not linearly D however satisfies $(**)$.
\end{thm}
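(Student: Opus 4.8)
The plan is to build a ZFC counterexample by mimicking the two $\clubsuit$-based constructions but replacing the single "guessing at $\omega_1$" with a construction on $\cont$ that uses an enumeration of countable subsets of $\mathbb{R}$ (or of the reals themselves), in the spirit of the Proposition in Section 3 that produced the ZFC D-space of size $\cont$ failing $(*)$. Concretely, I would take the underlying set to be $X=\re\times 2$ (or $\cont\times\omega$ with a suitable re-indexing), keep one layer discrete, and use the other layer as the "guessing" layer. The key design goal is to arrange, for every $A\in[X]^{\cont}$ of the discrete layer, that $\overline{A}$ contains a closed discrete set of full size $\cont$ (giving $(**)$), while simultaneously ensuring that \emph{every} closed discrete subset of $X$ projects to a small set, so that no closed discrete set can be $\mathcal{U}$-big for the canonical monotone cover, defeating linear D via Theorem \ref{linD}(2).

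First I would fix the combinatorial skeleton: enumerate the closed dense-in-itself subsets $\{C_\alpha:\alpha<\cont\}$ of $\re$ and, for each $\alpha$, the countable sets $\{Q_\alpha^\beta:\beta<\cont\}$ with $C_\alpha\subseteq\overline{Q_\alpha^\beta}$, exactly as in the Section 3 theorem, and enumerate the pairs in order type $\cont$. Then, by induction along $\delta<\cont$, for the pair $p_\delta=(\alpha,\beta)$ I would pick a fresh point in the guessing layer lying over $C_\alpha$ and declare a neighborhood base for it generated by a sequence inside $Q_\alpha^\beta$ converging (in the Euclidean sense) to its base point. To force $(**)$ with full-size closed discrete sets (not just a single accumulation point), the crucial refinement over Section 3 is that I must guess each large set $A$ \emph{many} times: for a given $(\alpha,\beta)$ I will assign $\cont$-many points of the guessing layer over $C_\alpha$, all accumulating to $Q_\alpha^\beta\times\{0\}$, so that whenever $B\in[\re\times\{0\}]^{\cont}$ captures some $C_\alpha\subseteq B_0'$ and $Q_\alpha^\beta\subseteq B$, the closure $\overline{B}$ absorbs a whole size-$\cont$ closed discrete subset of the guessing layer. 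I would verify $(**)$ by the now-standard argument: given $A$, either some fiber is large (done directly), or the projection is large and the $C_\alpha$/$Q_\alpha^\beta$ bookkeeping produces the required $D\in[\overline{A}]^{\cont}$.

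The failure of linear D I would extract from a "projection is small" claim analogous to Claims \ref{treffes}, \ref{treffes2}: every closed discrete $D\subseteq X$ meets only countably many (or at least $<\cont$ many) of the fibers over the base points, because any $\cont$-sized hitting set would be guessed and hence accumulate. Combined with the monotone cover built from initial segments, Theorem \ref{linD}(2) then yields non-linear-D. The main obstacle I anticipate is the bookkeeping that reconciles two competing demands: I need enough guessing points over each $C_\alpha$ to manufacture \emph{full-size} closed discrete subsets inside closures (for $(**)$), yet I must keep genuine closed discrete sets from having large projections (for non-linear-D), all while preserving local countability, local compactness, and $0$-dimensionality at each inductive step. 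Balancing these — in particular ensuring the newly added convergent sequences never secretly enlarge some old closed discrete set's projection, and that the $T_2$ separation of distinct base points sharing a common $C_\alpha$ survives (the almost-disjoint-family trick from Theorem \ref{counter} is the natural tool) — is where the real care lies; the topological regularity properties themselves should follow routinely from taking clopen, compact, countable neighborhoods as in the $\clubsuit+\mathrm{CH}$ construction.
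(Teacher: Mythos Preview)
Your architecture has a fatal flaw for the non-linear-D half. In a two-layer space $X=\re\times 2$ (or $\cont\times\omega$) with one layer discrete and the other a ``guessing'' layer whose basic neighborhoods reach only into the discrete layer, the entire guessing layer is automatically closed discrete in $X$. By Proposition~\ref{Dunion} such a space is even a $D$-space, let alone linearly D. Concretely, whatever monotone cover you build from ``initial segments'' of the inductive construction, the guessing layer $\re\times\{1\}$ is a closed discrete set that is $\mathcal{U}$-big for it, so Theorem~\ref{linD}(2) gives you nothing. Your hoped-for claim that ``every closed discrete set projects to a small set'' is false on its face: the guessing layer itself projects onto everything. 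The Section~3 template you are copying was designed to produce a $D$-space failing $(*)$; it cannot be tweaked into a non-linearly-D space without changing the shape of the underlying set.

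The paper avoids this by taking $X=\cont\times\cont$: there are $\cont$ many fibers $X_\delta=\{\delta\}\times\cont$, each closed discrete of size $\cont$, with no top fiber. Points of $X_\delta$ have neighborhoods reaching into $X_{<\delta}$, and the monotone cover is $\{X_{<\delta}:\delta<\cont\}$. The analogue of your projection claim (Claim~\ref{treffes3}) then says $\pi(D)$ is countable for every closed discrete $D$, which is now consistent because the large closed discrete sets all live in a single fiber. A second ingredient you are missing is the enumeration $\{F_\varphi:\varphi<\cont\}$ of all functions $\omega\to\cont$, together with the bijection $h:\cont\to\cont\times\cont$ and the almost disjoint family $\{M^\epsilon:\epsilon<\cont\}$: these are what let the construction capture an arbitrary $A\subseteq X$ (not just $A$ contained in a fixed ``discrete layer''), by guessing at step $\delta$ the function $n\mapsto$ (second coordinate of the point of $A$ in $X_{\delta'_n}$). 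Without this, $(**)$ would fail for sets $A$ that pick one point from each of uncountably many fibers. Finally, the inductive hypothesis the paper calls property~(E) --- controlling neighborhoods via the Euclidean position of the associated real $x_\delta$ --- is what makes the whole thing locally compact and $0$-dimensional; your sketch does not yet have a substitute for it.
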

\begin{proof} We will use the following notations: let $\{C_\alpha:\alpha<\cont\}$ be an enumeration of uncountable closed dense in itself subsets of $\mathbb{R}$ and enumerate $\{Q\in[\mathbb{R}\setminus \mathbb{Q}]^\omega:C_\alpha\subseteq \overline{Q}\}$ as $\{Q_\alpha^\beta:\beta<\cont\}$. Enumerate the triples $(C_\alpha,Q_\alpha^\beta,\gamma)$ for $\alpha,\beta,\gamma<\cont$ in order type $\cont$: $\{t_\delta:\delta<\cont\}$. We need an enumeration of all functions $F:\omega\rightarrow\cont$, $\{F_\varphi:\varphi<\cont\}$. Fix an $h:\cont\rightarrow\cont\times\cont$ bijection. Furthermore, let $\{M^\epsilon:\epsilon<\cont\}\subseteq [\omega]^\omega$ be an almost disjoint family on $\omega$.

 We define a topology on $X=\cont\times\cont$ by induction.  Let $X_\delta=\{\delta\}\times \cont$ for $\delta<\cont$. Let $(X_{<\delta},\tau_{<\delta})$ denote the topology defined by the induction till step $\delta<\cont$ where $X_{<\delta}=\bigcup \{X_{\delta'}:\delta'<\delta\}$. In step $\delta$ we pick a point $x_\delta$ from the real line which will help us define the neighborhoods of points in $X_\delta$. We have the following conditions which we preserve during the induction:
\begin{enumerate}[(i)]
  \item $(X_{<\delta},\tau_{<\delta})$ is locally countable, locally compact, 0-dimensional,
  \item $X_{<\delta'}$ is open in $X_{<\delta}$  for $\delta'<\delta$,
  \item for every $\delta'<\delta$ and  $(\delta',\epsilon)\in X_{\delta'}$ there is some neighborhood $G$ of $(\delta',\epsilon)$ such that $G\setminus\{(\delta',\epsilon)\}\subseteq X_{<\delta'}$,
  \item \emph{property (E)}: suppose $\delta'<\delta$ and $x_{\delta'}\in B$ where $B$ is Euclidean open. If $(\delta',\epsilon)\in X_{\delta'}$ then there is some compact, countable and clopen neighborhood $G$ of $(\delta',\epsilon)$ such that $G\subseteq \bigcup\{X_{\delta''}:x_{\delta''}\in B\}$.
\end{enumerate}
This way we will get a topology $\tau$ on $X$ if we take $\cup\{\tau_{<\delta}:\delta<\cont\}$ as a base.

Suppose we are in step $\delta\in \cont$, where $t_\delta=(C_\alpha,Q_\alpha^\beta,\gamma)$. We do the following:
\begin{itemize}
  \item pick a point $x_\delta\in C_\alpha\setminus(\{x_{\delta'}:\delta'<\delta\}\cup \mathbb{Q})$,
  \item if the set $Q_\alpha^\beta\cap\{x_{\delta'}:\delta'<\delta\}$ does not accumulate to $x_\delta$ just let each point of $X_\delta$ be discrete,
  \item if the set $Q_\alpha^\beta\cap\{x_{\delta'}:\delta'<\delta\}$ accumulates to $x_\delta$, choose a sequence $\{x_{\delta'_n}:n\in\omega\}\subseteq Q_\alpha^\beta\cap\{x_{\delta'}:\delta'<\delta\}$ converging to $x_\delta$,
  \item take disjoint open intervals $B_n$ with rational endpoints, containing $x_{\delta'_n}$.
\end{itemize}
Now we are ready to define a neighborhood of a point $(\delta,\epsilon)$. Suppose $h(\epsilon)=(\varphi,\rho)$.
\begin{itemize}
  \item Consider the points $(\delta'_n,F_\varphi(n))$ in $X_{\delta'_n}$,
  \item by property (E) we can take compact, countable and clopen neighborhoods $G_n$ of $(\delta'_n,F_\varphi(n))$ such that $G_n\subseteq \bigcup\{X_{\delta''}:x_{\delta''}\in B_n\}$. Observe that $\bigcup\{G_n:n\in\omega\}$ is closed in $(X_{<\delta},\tau_{<\delta})$.
\end{itemize}
Let
$$U((\delta,\epsilon),E)=\{(\delta,\epsilon)\}\cup\bigcup\{G_n:n\in M^\epsilon\setminus E\}$$
for $E\in[\omega]^{<\omega}$ and let the following be a neighborhood base for $(\delta,\epsilon)$
$$\bigl\{U((\delta,\epsilon),E):E\in[\omega]^{<\omega}\bigr\}.$$
Note that if $\epsilon\neq\epsilon'<\cont$ and $E=M^\epsilon\cap M^{\epsilon'}$ then $U((\delta,\epsilon),E)\cap U((\delta,\epsilon'),E)=\emptyset$; this yields that the resulting topology will be $T_2$. We need to check that the inductional assumptions still hold. Clearly $U((\delta,\epsilon),E)$ is countable and compact, we need to check that it is clopen. Since $U((\delta,\epsilon),E)\cap X_{<\delta}=\bigcup\{G_n:n\in\omega\}$ is closed in $X_{<\delta}$, we only need to check that $(\delta,\epsilon')\notin \overline{U((\delta,\epsilon),E)}$ for $\epsilon\neq\epsilon'<\cont$. Let $F=M^\epsilon\cap M^{\epsilon'}\in[\omega]^{<\omega}$ then $U((\delta,\epsilon),E)\cap U((\delta,\epsilon'),F)=\emptyset$. Properties (ii) and (iii) will clearly hold. We need to check (iv), property (E). For points in $X_{<\delta}$ this will still hold. Consider a Euclidean open $B$ such that $x_\delta\in B$ and a new point: $(\delta,\epsilon)\in X_\delta$. Using the notations of the definition of a basic neighborhood for $(\delta,\epsilon)$, there is some $m\in\omega$ such that $\bigcup\{B_n:n\geq m\}\subseteq B$. So for the following neighborhood:
$$G=\{(\delta,\epsilon)\}\cup\bigcup\{G_n:n\in M^\epsilon, n\geq m\}$$
we have that $G\subseteq \bigcup\{X_{\delta''}:x_{\delta''}\in B\}$, since for $n\geq m$ $B_n\subseteq B$ thus $G_n\subseteq \bigcup\{X_{\delta''}:x_{\delta''}\in B_n\} \subseteq \bigcup\{X_{\delta''}:x_{\delta''}\in B\}$.

 It is clear that $(X,\tau)$ is a locally countable, locally compact and 0-dimensional space. It is straightforward by property (iii) that each $X_\delta$ is closed discrete. Let $\pi(A)=\{\delta<\cont:A\cap X_\delta\neq\emptyset\}$ and $\pi_0(A)=\{x_\delta:\delta\in\pi(A)\}\subseteq \mathbb{R}$ for $A\subseteq X$.
\begin{clm} \label{treffes3} If $|\pi(A)|>\omega$ for $A\subseteq X$ then there are $\cont$ many $\delta<\cont$ such that $|X_\delta \cap A'|=\cont$.
\end{clm}
\begin{proof} There is $\alpha<\cont$ such that $C_\alpha\subseteq \overline{\pi_0(A)}$ (Euclidean closure taken) and $\beta<\cont$ such that $Q_\alpha^\beta\subseteq \pi_0(A)$. Let
\begin{eqnarray*}
D=\bigl\{\delta<\cont:\exists\gamma<\cont\bigl(t_\delta=(C_\alpha,Q_\alpha^\beta,\gamma)\bigr) \\ \text{ and }\forall\delta'<\cont(x_{\delta'}\in Q_\alpha^\beta\Rightarrow \delta'<\delta)\bigr\}.
 \end{eqnarray*}
 Take a $\delta\in D$. Clearly we did not defined $X_\delta$ to be discrete since all points in $Q_\alpha^\beta$ are of the form $x_{\delta'}$ where $\delta'<\delta$. So at step $\delta$ in the induction we chose some convergent sequence $\{x_{\delta'_n}:n\in\omega\}$ from $Q_\alpha^\beta$ where $\delta'_n<\delta$. Let $F:\omega\rightarrow \cont$ such that $(\delta_n',F(n))\in A$. There is some $\varphi\in\cont$ such that $F=F_\varphi$. We claim that $\{(\delta,\epsilon)\in X_\delta:\exists \rho<\cont:h(\epsilon)=(\varphi,\rho)\}\subseteq A'$. For such a point $(\delta,\epsilon)$, we used $F_\varphi=F$ for the definition of basic neighborhoods, thus the set $\{(\delta'_n,F(n)):n\in\omega\}\subseteq A$ accumulates to $(\delta,\epsilon)$.
\end{proof}
\begin{clm} $X$ satisfies $(**)$.
\end{clm}
\begin{proof}Let $A\in[X]^{\kappa}$ such that $\kappa$ is an uncountable, regular cardinal. If there is a $\delta<\cont$ such that $|A\cap X_\delta|=\kappa$ we are done. Otherwise $|\pi(A)|>\omega$ since $|A\cap X_\delta|< \kappa$ for all $\delta<\cont$. By Claim \ref{treffes3} $\overline{A}$ intersects (continuum) many closed discrete sets $X_\delta$, in $\cont$ many points.
\end{proof}
\begin{clm}$X$ is not linearly D.
\end{clm}
\begin{proof}Suppose that $D\subseteq X$ is closed discrete. Then $\pi(D)$ is countable by Claim \ref{treffes3}. Hence there is no closed discrete set which is big for the open cover $\{X_{<\delta}:\delta<\cont\}$. Thus $X$ is not linearly $D$ by Theorem \ref{linD}.
\end{proof}
This completes the proof of this theorem.
\end{proof}

\textbf{Remark:} P. Nyikos gave an example of a space T which is not a D-space, however for every  closed $F\subseteq T$: $e(F)=L(F)$. From \cite[Theorem 1.11]{nyikos} from his article, one can see that $T$ is linearly D (use the characterization of linear D property by Theorem \ref{linD}). Applying Claim \ref{treffes3} to our last construction we get the following:

\begin{cor} There exists a Hausdorff space $X$ of cardinality $\cont$ such that $X$ is locally countable, locally compact, 0-dimensional, not linearly D however $e(F)=L(F)$ for every closed subset $F\subseteq X$.
\end{cor}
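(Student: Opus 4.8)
The plan is to take $X$ to be precisely the space constructed in the preceding theorem: it is already established to be Hausdorff, locally countable, locally compact, $0$-dimensional, of cardinality $\cont$, and not linearly D. So the only thing left to verify is that $e(F)=L(F)$ holds for every closed $F\subseteq X$. I would begin by recording the two soft inequalities valid in any space, namely $e(F)\le L(F)\le|F|$ (for infinite $F$). The first is the standard fact that a closed discrete $D\subseteq F$ forces every open cover of $F$ — refine an isolating cover of $D$ together with $F\setminus D$ — to have subcovers of cardinality at least $|D|$; the second is the trivial bound obtained by selecting one member of a cover through each point. Consequently, to obtain $e(F)=L(F)$ it suffices to prove the single reverse inequality $|F|\le e(F)$, since then the chain $e(F)\le L(F)\le|F|\le e(F)$ collapses to equality. (Finite, and more generally compact, closed sets are disposed of directly by the convention that these cardinal functions are taken to be at least $\aleph_0$; note that in a locally countable space every compact set is countable, so no uncountable compact closed subset of small extent can arise.)

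The heart of the matter is therefore to show that every closed $F\subseteq X$ contains a closed discrete subset of cardinality $|F|$, for which I would split on the size of $\pi(F)$. If $|\pi(F)|\le\omega$, then $F=\bigcup_{\delta\in\pi(F)}(F\cap X_\delta)$ is a countable union of sets each of which is closed discrete in $F$, because each $X_\delta$ is closed discrete in $X$; hence $|F\cap X_\delta|\le e(F)$ for every such $\delta$, and $|F|\le\omega\cdot e(F)=e(F)$. If instead $|\pi(F)|>\omega$, I would apply Claim \ref{treffes3} with $A=F$: it provides some (indeed $\cont$ many) $\delta<\cont$ with $|X_\delta\cap F'|=\cont$. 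Since $F$ is closed we have $F'\subseteq F$, so $X_\delta\cap F$ is a closed discrete subset of $F$ of size $\cont$, whence $e(F)=\cont\ge|F|$. In either case $|F|\le e(F)$, which completes the verification.

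The work here is genuinely light: the construction and its crucial Claim \ref{treffes3} are already in hand, and the corollary amounts to the observation that the dichotomy ``$\pi(F)$ countable versus uncountable'' pins down $e(F)$ tightly enough to force $e(F)=L(F)$. I do not anticipate a serious obstacle. The only points demanding minor care are the reduction step — recognizing that $e(F)\le L(F)\le|F|$ always holds, so the whole statement reduces to the one-sided bound $|F|\le e(F)$ — and the bookkeeping with the $\aleph_0$-convention that settles the finite and compact cases; the dichotomy itself is exactly what rules out an uncountable closed subspace of countable extent, which is what makes the bound $|F|\le e(F)$ legitimate in the first place. If anything is subtle, it is merely confirming that $F\cap X_\delta$ is relatively closed discrete in $F$ and that Claim \ref{treffes3} applies verbatim with the closed set $F$ in place of an arbitrary $A$.
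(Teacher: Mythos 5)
Your proposal is correct and follows exactly the route the paper intends: the paper gives no written proof beyond the remark that the corollary follows by ``applying Claim \ref{treffes3} to our last construction,'' and your argument is the natural fleshing-out of that remark --- the standard inequalities $e(F)\le L(F)\le|F|$ plus the dichotomy on $|\pi(F)|$ (countable case handled by the $\sigma$-closed-discrete decomposition along the $X_\delta$, uncountable case by Claim \ref{treffes3} giving a closed discrete subset of size $\cont$ inside $F'\subseteq F$). No gaps.
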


\section{Consistently on property D and \lowercase{a}D}

Our main goal in this section is to construct a space which is not linearly D, however every closed subset of it is irreducible; hence aD by Theorem \ref{irreduc}.

We will use the following set-theoretical assumption:
\begin{enumerate}[$(\diamondsuit^*)$]
\item there is a \emph{$\diamondsuit^*$-sequence}, meaning that there exists an $\{\mathcal{A}_\alpha:\alpha\in\lime\}$ such that $\mathcal{A}_\alpha\subseteq [\alpha]^\omega$ is countable and for every $X\subseteq \omg$ there is a club $C\subseteq \omg$ such that $X\cap \alpha \in \mathcal{A}_\alpha$ for all $\alpha\in C$.
\end{enumerate}
Before proving the theorem we need the following easy claim about maximal almost disjoint families (MAD, in short).

\begin{clm}\label{specmad}If $\{N_i:i\in\omega\}\subseteq [\omega]^\omega$ then there is a MAD family $\mad\subseteq [\omega]^\omega$ of size $\cont$ such that for all $M\in \mad$ and $i\in\omega$: $|M\cap N_i|=\omega$.
\end{clm}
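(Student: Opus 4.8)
The plan is to reduce the statement to constructing a MAD family that is \emph{transverse} to a suitable partition, and then to build such a family explicitly along the binary tree $2^{<\omega}$ so that distinct members diverge into disjoint regions.

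First I would replace the sets $N_i$ by a partition. A routine diagonal argument produces pairwise disjoint infinite sets $S_i\subseteq N_i$: enumerate $\omega\times\omega$ so that every first coordinate occurs infinitely often, and at each step throw a fresh (so far unused) element of $N_i$ into $S_i$. If the complement $\omega\setminus\bigcup_i S_i$ is finite, absorb it into $S_0$; if it is infinite, add it as one extra part. This yields a partition $\{P_i:i\in\omega\}$ of $\omega$ into infinite sets with the feature that each $N_i$ contains a part up to a finite set, so any set meeting every $P_i$ infinitely also meets every $N_i$ infinitely. Hence it suffices to build a MAD family $\mad\subseteq[\omega]^\omega$ of size $\cont$ whose members all meet every $P_i$ infinitely. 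Moreover such a family is automatically MAD on $\omega$ once the traces $\{M\cap P_i:M\in\mad\}$ are MAD on $P_i$ for each $i$: any infinite $A\subseteq\omega$ meets some $P_{i_0}$ in an infinite set, which is then caught by a trace.

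The main obstacle is a tension between two requirements. Making the traces MAD on each $P_i$ forces continuum-many distinct traces per part, so the traces inside a single part must overlap heavily; yet global almost disjointness allows each pair of members to overlap in only finitely many points \emph{in total}, across all infinitely many parts simultaneously. Naively choosing a MAD family in each part and gluing fails, since a finite overlap in each of infinitely many parts sums to an infinite overlap. I resolve this with $2^{<\omega}$: split each part as $P_i=\bigsqcup_{t\in 2^i}P_i^t$ into $2^i$ infinite pieces, index the members by branches $r\in 2^\omega$, and let the trace of $M_r$ in part $i$ live in the single piece $P_i^{\,r\restriction i}$. Two branches $r\neq s$ land in the \emph{same} piece of part $i$ exactly when $r\restriction i=s\restriction i$, that is only for $i\le\Delta(r,s)$, which is finitely many parts; in all larger parts their traces occupy disjoint pieces and are therefore disjoint.

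Concretely, on each piece $P_i^t$ fix a MAD family $\mathcal B_i^t$ of size $\cont$ together with a bijection $\beta_i^t$ from $\{r\in 2^\omega:r\restriction i=t\}$ (a set of size $\cont$) onto $\mathcal B_i^t$, and set $M_r=\bigcup_i\beta_i^{\,r\restriction i}(r)$. For $r\neq s$ the intersection $M_r\cap M_s$ is contained in the finitely many parts $i\le\Delta(r,s)$, and in each such part the two traces are \emph{distinct} members of one MAD family $\mathcal B_i^{\,r\restriction i}$ (this is where injectivity of $\beta_i^t$ enters), hence overlap in a finite set; so $M_r\cap M_s$ is finite, and $\mad=\{M_r:r\in 2^\omega\}$ is almost disjoint of size $\cont$ with every member meeting every part, and so every $N_i$, infinitely. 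Finally the traces in part $i$ are exactly $\bigcup_{t\in 2^i}\mathcal B_i^t$, which is MAD on $P_i$ (an infinite subset of $P_i$ meets some piece $P_i^{t_0}$ infinitely and is caught by $\mathcal B_i^{t_0}$), so by the remark above $\mad$ is MAD on $\omega$. The only genuinely delicate point is the divergence-into-disjoint-pieces bookkeeping, which the binary tree handles automatically.
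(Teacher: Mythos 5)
Your proof is correct, but it takes a genuinely different route from the paper's. The paper transfers the problem to $\mathbb{Q}$, assumes without loss of generality that each $N_i$ is dense, and for every real $x_\alpha$ chooses a rational sequence $S_\alpha$ converging to $x_\alpha$ with $|S_\alpha\cap N_i|=\omega$ for all $i$; the family $\{S_\alpha:\alpha<\cont\}$ is automatically almost disjoint (convergent sequences with distinct limits), it is extended to a MAD family by adjoining sets $T_\alpha$, and maximality is reconciled with the hitting requirement by the merge trick $S_\alpha\cup T_\alpha$, so that every member of the final family still contains some $S_\alpha$. You instead stay purely combinatorial: you refine the $N_i$ into a partition, split the $i$-th part into $2^i$ pieces indexed by $2^i$, and glue MAD families on the pieces along branches of $2^\omega$, so that two distinct members share a piece only in the finitely many parts below their splitting level. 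The paper's argument is shorter because the geometry of $\mathbb{R}$ hands it a size-$\cont$ almost disjoint family with the hitting property for free; your tree construction is more elementary (no reals needed) and makes explicit why ``traces MAD on each part'' is compatible with global almost disjointness, which is exactly the tension you correctly identify as the heart of the matter. One cosmetic point: in the case where you absorb the finite leftover into $S_0$, you should note that $P_0\subseteq N_0$ only modulo a finite set, which is still enough for the reduction; as written this is implicit but harmless.
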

\begin{proof} We will construct the MAD family $\mad$ on $\mathbb{Q}$. We can suppose that each $N_i$ is dense in $\mathbb{Q}$.  Let $\mathbb{R}=\{x_\alpha:\alpha<\cont\}$ and for all $\alpha<\cont$ let $S_\alpha\subseteq \mathbb{Q}$ such that $S_\alpha$ is a convergent sequence with limit point $x_\alpha$ and $|S_\alpha\cap N_i|=\omega$ for all $i\in\omega$. Then $\mathcal{S}=\{S_\alpha:\alpha<\cont\}$ is almost disjoint, let $\mathcal{T}=\{T_\alpha:\alpha<\lambda\}\subseteq [\mathbb{Q}]^\omega$ such that $\mathcal{S}\cup\mathcal{T}$ is MAD. Then $\mad=\{S_\alpha\cup T_\alpha:\alpha<\lambda\}\cup\{S_\alpha:\lambda\leq\alpha<\cont\}$ is a MAD family with the desired property.
\end{proof}

\begin{thm} Suppose $(\diamondsuit^*)$. There is a locally countable, locally compact, 0-dimensional $T_2$ space $X$ of size $\omg$ such that X is not linearly D, however every closed subset $F\subseteq X$ is irreducible; equivalently $X$ is an aD-space.
\end{thm}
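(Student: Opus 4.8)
The plan is to build $X$ on the underlying set $\omg\times\omg$ by a transfinite recursion on the columns $X_\alpha=\{\alpha\}\times\omg$, imitating the framework of the previous $(\clubsuit+\mathrm{CH})$ theorem but now driving the recursion with the $\diamondsuit^*$-sequence $\{\mathcal A_\alpha:\alpha\in\lime\}$ and indexing the neighborhoods at each limit column by a MAD family supplied by Claim \ref{specmad}. At stage $\alpha$ I would define the neighborhoods of the points of $X_\alpha$ so as to preserve, verbatim, the inductive conditions (i)--(iv) used before: $(X_{<\alpha},\tau_{<\alpha})$ is locally countable, locally compact and $0$-dimensional, every $X_{<\beta}$ is open, each point has a neighborhood meeting $X_{<\beta}$ only downward, and the blocks are clopen. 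For a limit $\alpha$ the ladder $A_\alpha$ is used to reach down along a cofinal sequence, and a point $(\alpha,\beta)$ gets basic neighborhoods $\{(\alpha,\beta)\}\cup\bigcup\{G_n:n\in M^\beta\setminus E\}$ with the $G_n$ compact clopen sets sitting over the ladder. The essential difference from Section 4 is that the $G_n$ and the MAD family are chosen to \emph{anticipate}, through the $\diamondsuit^*$-guesses, the closed sets and open covers that will later have to be refined; this is what upgrades the space from merely $(**)$ to aD. Granting the construction, two things must be verified: that $X$ is not linearly D and that every closed $F\subseteq X$ is irreducible.

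Non-linear-D is the easy half and should run as in Section 4. I would prove the analogue of Claim \ref{treffes2}: if $|\pi(A)|=\omg$ then the guessing forces $A$ to accumulate at stationarily many columns, so every closed discrete subset of $X$ has countable, hence bounded, projection. Consequently no closed discrete set is big for the non-trivial monotone open cover $\{X_{<\gamma}:\gamma<\omg\}$, and $X$ fails to be linearly D by the equivalence $(1)\Leftrightarrow(2)$ of Theorem \ref{linD}.

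The heart of the proof is aD, which by Theorem \ref{irreduc} I would obtain by showing that every closed $F\subseteq X$ is irreducible. Fix such an $F$ and an open cover $\mathcal U$; by local compactness and $0$-dimensionality I may refine $\mathcal U$ to a cover by countable compact clopen sets, and then code this refinement together with $F$ as subsets of $\omg$ (via a fixed, column-respecting identification of $X$ with $\omg$). Using $\diamondsuit^*$ I obtain a club $C\subseteq\omg$ on which these codes are correctly guessed, i.e. for $\gamma\in C$ the traces of $F$ and of the cover on $X_{<\gamma}$ lie in $\mathcal A_\gamma$. The minimal refinement is then assembled blockwise along $C$: on each block $X_{<\gamma'}\setminus X_{<\gamma}$ between consecutive points of $C$ the reflected data were already visible to the construction, so the neighborhoods installed there — together with the maximality of the MAD family through Claim \ref{specmad} at each limit column — let me select, for each relevant cover element, a private point lying in no other chosen element. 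Collecting these choices yields an open refinement $\mathcal V$ of $\mathcal U\!\restriction\! F$ that covers $F$ and in which every member is indispensable; this is exactly a minimal open refinement, so $F$ is irreducible and $X$ is aD.

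The main obstacle is precisely this last step: arranging the neighborhoods at construction time so that \emph{every} open cover of \emph{every} closed set later admits a minimal refinement, and then checking both that $\mathcal V$ covers $F$ and that it is genuinely minimal (no member redundant). This is where the two set-theoretic gadgets are indispensable. The strength of $\diamondsuit^*$ over $\diamondsuit$ or $\clubsuit$ is needed so that an arbitrary closed set and cover reflect on a \emph{club} rather than merely a stationary set, which is what makes the blockwise assembly coherent across all of $\omg$; and the maximality of the family from Claim \ref{specmad}, with its prescribed infinite intersections against the ladder intervals, is what prevents a cover from ``slipping between'' the countably many neighborhoods available at a limit column without a private point being caught there. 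The delicate bookkeeping to get right is the one that keeps projections of closed discrete sets countable (securing non-linear-D) while still leaving, at each reflected level, enough room to pin down the private points needed to witness irreducibility.
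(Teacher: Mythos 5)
Your skeleton --- recursion on the columns $X_\alpha=\{\alpha\}\times\omg$, MAD families with prescribed infinite intersections from Claim \ref{specmad}, a $\diamondsuit^*$-derived guessing sequence, and the non-linearly-D half via ``closed discrete sets have bounded projection, so nothing is big for $\{X_{<\gamma}:\gamma<\omg\}$'' --- matches the paper. But the heart of the theorem is irreducibility of every closed set, and there your plan has a genuine gap. You propose to use $\diamondsuit^*$ to guess, at construction time, the closed sets \emph{and the open covers} that will later need minimal refinements, and to install neighborhoods that ``anticipate'' them. This cannot work as stated: the $\diamondsuit^*$-sequence guesses subsets of $\omg$ (equivalently, subsets of $X$), not open covers of arbitrary closed subspaces; the topology is fixed before an arbitrary cover is presented, and there are far too many covers to reflect them all into the countable families $\mathcal{A}_\alpha$. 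Your appeal to ``the maximality of the MAD family catching a private point'' is not an argument; in the actual construction the MAD family's job is entirely different --- it makes \emph{every} point of $X_\alpha$ an accumulation point of \emph{every} piece $A_\alpha^n$ of the guessed set, which is what yields a \emph{club} of fully accumulated columns (Claim \ref{club}) and kills linear D.

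What the paper actually does for irreducibility involves three ingredients absent from your outline. First, at each limit $\alpha$ it enumerates (using CH, a consequence of $\diamondsuit^*$) \emph{all} countable sets running up to $\alpha$ and forces each to accumulate at some point of $X_\alpha$ (condition (d), implemented via Lemma \ref{indukc}); this is a universal catching property, not a guessing one, and it gives Claim \ref{sorkomp2}: any open set containing $X_\alpha$ contains a whole tail block $(\alpha',\alpha]\times\omg$ (it also makes projections of closed discrete sets \emph{finite}). Second, given a cover $\{U(x):x\in F\}$ of a closed $F$ with uncountable projection, one applies this plus Fodor's lemma to get a single $\delta$ and a stationary set $S$ of columns $\alpha$ with $(\delta,\alpha]\times\omg\subseteq\bigcup\{U(x):x\in X_\alpha\}$; minimality is then achieved by an elementary ``private point'' surgery at one fixed column $\delta_0$ inside $F$, replacing $U(d_\alpha)$ by $\bigl(U(d_\alpha)\setminus(\{\delta_0\}\times S_0)\bigr)\cup\{(\delta_0,\alpha)\}$ so that each member owns the point $(\delta_0,\alpha)$ exclusively. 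Third, the uncovered remainder has countable projection by Claim \ref{club}, hence is a countable union of closed discrete sets, hence a D-space and irreducible, and the two refinements are glued. Without the universal catching of run-up sequences, the pressing-down step, and the private-point modification, your argument does not produce a minimal refinement, so the aD half of the theorem is not established.
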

\begin{proof} We will define a topology on $X=\omg\times\omg$. Let $X_\alpha=\{\alpha\}\times\omg$ and $X_{<\alpha}=\alpha\times \omg$ for $\alpha<\omg$.
 \begin{dfn}The set $A\in [X]^\omega$ \emph{runs up to $\alpha<\omg$} iff $A=\{(\alpha_n,\beta_n):n\in\omega)\}\subseteq X_{<\alpha}$ such that $\alpha_0\leq...\leq\alpha_n\leq...$ and $\sup\{\alpha_n:n\in\omega\}=\alpha$.
 \end{dfn}
 Note that if $A\subseteq X$ runs up to some $\alpha<\omg$ then $A\cap X_\beta$ is finite for all $\beta<\omg$.

  We need the following consequence of $(\diamondsuit^*)$.  Let $\pi(A)=\{\alpha\in\omg:A\cap X_\alpha\neq \emptyset\}$ for $A\subseteq X$.
\begin{clm} \label{seq} $(\diamondsuit^*)$  There exists a sequence $\{A_\alpha:\alpha\in\lime\}\subseteq [X]^\omega$ with $A_\alpha=\bigcup\{A_\alpha^n:n\in\omega\}$  for all $\alpha\in\lime$ such that
\begin{enumerate}
\item $|A_\alpha^n|=\omega$ for all $n\in\omega$,
\item $A_\alpha$ runs up to $\alpha$,
\item for all $Y\subseteq X$ if $|\pi(Y)|=\omg$ then $$\exists \text{ club } C\subseteq\omg \text{ such that } \forall\alpha\in C\exists n\in\omega (A_\alpha^n\subseteq Y).$$
\end{enumerate}
\end{clm}
\begin{proof} Let $\{\mathcal{A}_\alpha:\alpha\in\lime\}$ denote a $\diamondsuit^*$-sequence. Let $i:\omg\times\omg\rightarrow\omg$ denote a bijection which maps $\bigl((\alpha+1)\times(\alpha+1)\bigr)\setminus(\alpha\times\alpha)$ to $\omega\cdot(\alpha+1)\setminus\omega\cdot\alpha$. Let $$\widetilde{\mathcal{A}}_\alpha=\{i^{-1}(A):A\in\mathcal{A}_{\omega\cdot\alpha},\sup\bigl(\pi(i^{-1}(A))\bigr)=\alpha\}$$
and let $A_\alpha=\bigcup\{A_\alpha^n:n\in\omega\}$ such that
\begin{enumerate}
\item $|A_\alpha^n|=\omega$ for all $n\in\omega$,
\item $A_\alpha$ runs up to $\alpha$,
\item[$(3)'$] for all $B\in\widetilde{\mathcal{A}}_\alpha$ there is $n\in\omega$ such that $A_\alpha^n\subseteq B$,
\end{enumerate}
for all $\alpha\in\lime$. We claim that the sequence $\{A_\alpha:\alpha\in\lime\}$ has the desired properties. Let $Y\subseteq X$ such that $|\pi(Y)|=\omg$. There is some club $C_0\subseteq \omg$ such that $Y\cap X_{<\alpha}\subseteq \alpha\times\alpha$ for $\alpha\in C_0$. There is some club $C_1\subseteq\omg$ such that $\alpha\cap i[Y]\in \mathcal{A}_\alpha$ for $\alpha\in C_1$. Let $C_2=\{\alpha<\omg:\omega\cdot\alpha\in C_1\}$; clearly, $C_2$ is a club. Let $C=C_0\cap C_2\cap\pi(Y)'$. Fix some $\alpha\in C$. Then $\omega\cdot\alpha\cap i[Y]=A$ for some $A\in\mathcal{A}_{\omega\cdot\alpha}$, thus $i[Y\cap X_{<\alpha}]=A$ since $\omega\cdot\alpha=i[\alpha\times\alpha]$ and $Y\cap X_{<\alpha}\subseteq \alpha\times\alpha$. Hence $i^{-1}(A)=Y\cap X_{<\alpha}$ and $i^{-1}(A)\in\widetilde{\mathcal{A}}_\alpha$ because $\alpha\in\pi(Y)'$. Thus there is $n\in\omega$ such that $A_\alpha^n\subseteq Y$ by $(3)'$.
\end{proof}
Let $\{A_\alpha:\alpha\in\lime\}\subseteq [X]^\omega$ denote a sequence with $A_\alpha=\bigcup\{A_\alpha^n:n\in\omega\}$ for $\alpha\in\lime$ from Claim \ref{seq}. We want to define the topology on $X$ such that
\begin{itemize}
          \item $X_\alpha$ is closed discrete for all $\alpha<\omg$,
          \item $X_{<\alpha}$ is open for all $\alpha\in\omg$,
          \item if $A\in [X]^\omega$ runs up to $\alpha$ then $A$ has an accumulation point in $X_\alpha$,
          \item $X_\alpha\subseteq \overline{A_\alpha^n}$ for all $\alpha\in\lime$ and $n\in\omega$.
 \end{itemize}
Let $\mad_\alpha\subseteq [A_\alpha]^\omega$ denote a MAD family on $A_\alpha$ for $\alpha\in\lime$ such that $|M\cap A_\alpha^n|=\omega$ for all $M\in\mad_\alpha$ and $n\in\omega$; such an $\mad_\alpha$ exists by Claim \ref{specmad}. Enumerate $\mad_\alpha=\{M_\alpha^\beta:\beta<\omg\}$.

 We define topologies $\tau_{<\alpha}$ on $X_{<\alpha}$ by induction on $\alpha<\omg$ such that $\tau_{<\alpha}\cap\mathcal{P}({X_{<\beta}})=\tau_{<\beta}$ for all $\beta<\alpha<\omg$. This way we will get a topology $\tau$ on $X$ if we take $\cup\{\tau_{<\alpha}:\alpha<\omg\}$ as a base.

  Suppose $\alpha<\omg$ and we have defined the topology $(X_{<\alpha},\tau_{<\alpha})$ such that
\begin{enumerate}[(i)]
\item $(X_{<\alpha},\tau_{<\alpha})$ is a locally countable, locally compact, 0-dimension-al $T_2$ space,
\item for all $\alpha'<\alpha$ and $x\in X_{\alpha'}$ there is some neighborhood $G$ of $x$ such that $G\cap X_{\alpha'}=\{x\}$,
    \item $(\alpha_0,\alpha_1]\times\omg\subseteq X_{<\alpha}$ is clopen for all $\alpha_0<\alpha_1<\alpha$.
\end{enumerate}
 If $\alpha\in\omg\setminus\lime$ then let $X_\alpha$ be discrete. Suppose $\alpha\in\lime$ and let us enumerate $\{F\subseteq X_{<\alpha}\setminus A_\alpha:F \text{ runs up to }\alpha\}$ as $\{F_\alpha^\beta:\beta<\omg\}$.

 \begin{dfn}A subspace $A\subseteq T$ of a topological space $T$ is \emph{completely discrete} iff there is a discrete family of open sets $\{G_a:a\in A\}$ such that $a\in G_a$ for all $a\in A$.
 \end{dfn}

 The following claim will be useful later.

 \begin{clm} \label{compld}Suppose that $A=\{(\alpha_n,\beta_n):n\in\omega\}\subseteq X$ runs up to $\alpha$. Then $A$ is completely discrete in $X_{<\alpha}$; hence closed discrete either.
 \end{clm}
 \begin{proof} Let $G_0=(0,\alpha_0]\times\omg$ and $G_{n+1}=(\alpha_n,\alpha_{n+1}]\times\omg$ for $n\in\omega$. $G_n$ is open for all $n\in\omega$ by inductional hypothesis (iii). Note that $\{G_n:n\in\omega\}$ is a discrete family of open sets such that $A\cap G_n$ is finite for all $n\in\omega$. Let $\mathcal{G}_n$ denote a finite, disjoint family of clopen subsets of $G_n$ such that for all $a\in A\cap G_n$ there is exactly one $G\in\mathcal{G}_n$ such that $a\in G$. Then the discrete family $\cup\{\mathcal{G}_n:n\in\omega\}$ shows that $A$ is completely discrete.
 \end{proof}

 In step $\alpha\in\lime$ we define the neighborhoods of points in $X_\alpha=\{(\alpha,\beta):\beta<\omg\}$ by induction on $\beta<\omg$ such that:
\begin{enumerate}[$(a)$]
\item $X_{<\alpha}\cup\{(\alpha,\beta'):\beta'\leq\beta\}$ is locally countable, locally compact and 0-dimensional $T_2$,
    \item there is some neighborhood $U$ of $(\alpha,\beta)$ such
     that $U\cap A_\alpha\subseteq M_\alpha^\beta$,
\item $M_\alpha^\beta$ converges to $(\alpha,\beta)$,
\item $F_\alpha^\beta$ accumulates to $(\alpha,\beta')$ for some $\beta'\leq\beta$.
\end{enumerate}
We need the following lemma to carry out the induction on $\beta<\omg$.

\begin{lemm}\label{indukc} Suppose that $(T\cup S,\tau)$ is a locally countable, locally compact and 0-dimensional $T_2$ space such that $T$ is open and $S$ is countable. Let $D=\{d_n:n\in\omega\}\subseteq T$ closed discrete in $T\cup S$ and completely discrete in $T$. Let $r\notin T\cup S$. Then there is a topology $\rho$ on $R=T\cup S\cup \{r\}$ such that
\begin{itemize}
\item $(R,\rho)$ is locally countable, locally compact and 0-dimensional $T_2$,
\item $\rho|_{(T\cup S)}=\tau$,
\item $D$ converges to $r$ and $r\notin \overline{S}$ in $(R,\rho)$.
\end{itemize}
\end{lemm}
\begin{proof} Suppose that $d_n\in G_n$ such that $\{G_n:n\in\omega\}$ is a family of open sets which is  discrete in $T$. For each $n\in\omega$ let $\{B_i^n:i\in\omega\}$ denote a neighborhood base of $d_n$ such that
\begin{itemize}
\item $G_n\supseteq B^n_0\supseteq B^n_1\supseteq ...$ and
\item $B^n_i$ is countable, compact and clopen for all $n,i\in\omega$.
\end{itemize}
Since $S\cap D=\emptyset$ there is some clopen neighborhood $U_s$ of each $s\in S$ such that $U_s\cap D=\emptyset$. There is $g_s:\omega\rightarrow\omega$ such that $$U_s\cap B^n_{g_s(n)}=\emptyset \text{ for all } n\in\omega.$$
Since $S$ is countable, there is $g:\omega\rightarrow\omega$ such that for all $s\in S$ there is some $N\in\omega$ such that $g_s(n)\leq g(n)$ for all $n\geq N$. Define the topology $\rho$ on $R$ as follows. Let $$B_N=\{r\}\cup\bigcup\{B^n_{g_(n)}:n\geq N\}\text{ and }\mathcal{B}=\{B_N:N\in\omega\}.$$
Let $\rho$ be the topology on $R$ generated by $\tau\cup \mathcal{B}$.

 Clearly $\rho|_{(T\cup S)}=\tau$. We claim that $(R,\rho)$ is locally countable, locally compact and 0-dimensional. Since $\mathcal{B}$ is a neighborhood base for $r$, it suffices to prove that each $B\in\mathcal{B}$ is countable, compact (trivial) and clopen. Let $N\in\omega$ then $B_N$ is clopen in $T$ since $\bigcup\{B^n_{g(n)}:n\in\omega\}$ is a family of clopen sets which is discrete in $T$ guaranteed by the discrete family $\{G_n:n\in\omega\}$. Let $s\in S$. There is $N\in\omega$ such that $U_s\cap B^n_{g(n)}=\emptyset$ for $n\geq N$. There is some neighborhood $V\in\tau$ of $s$ such that $V\cap\bigcup\{B^n_{g(n)}:n<N\}=\emptyset$ since $s$ is not in the closed set $\bigcup\{B^n_{g(n)}:n<N\}$. Thus $(U_s\cap V)\cap B_N=\emptyset$. This proves that $B_N$ is clopen.

  We claim that $(R,\rho)$ is $T_2$. Let $s\in S$, then there is $N\in\omega$ such that $U_s\cap B^n_{g(n)}=\emptyset$ for $n\geq N$, thus $B_N\cap U_s=\emptyset$. As noted before $B_N\cap T$ is closed and clearly $\bigcap\{B_N\cap T:N\in\omega\}=\emptyset$. This yields that any point $t\in T$ and $r$ can be separated, thus $(R,\rho)$ is $T_2$.

  Clearly $D$ converges to $r$ and $S\cap B=\emptyset$ for any $B\in\mathcal{B}$ thus $r\notin \overline{S}$.
\end{proof}

Suppose we are in step $\beta<\omg$ and we defined the neighborhoods of points in $X_{<\alpha}\cup\{(\alpha,\beta'):\beta'<\beta\}$. We use Lemma \ref{indukc} to define the neighborhoods of $r=(\alpha,\beta)$. Let $T=X_{<\alpha}$ and $S=\{(\alpha,\beta'):\beta'<\beta\}\cup(A_\alpha\setminus M_\alpha^\beta)$. Note that $F_\alpha^\beta\cup M_\alpha^\beta$ runs up to $\alpha$ thus closed and completely discrete in $T$ by Claim \ref{compld}. Also, $M_\alpha^\beta$ is closed discrete in $T\cup S$ by inductional hypothesis (b) for $(\alpha,\beta')$ where $\beta'<\beta$.
\begin{itemize}
\item If $F_\alpha^\beta$ accumulates to $x_{\beta'}$ for some  $\beta'<\beta$ then let $D=M_\alpha^\beta$.
    \item If $F_\alpha^\beta$ is closed discrete in $T\cup S$ then let $D=M_\alpha^\beta\cup F_\alpha^\beta$.
\end{itemize}
 Note that $D$ is closed discrete in $T\cup S$. By Claim \ref{indukc} we can define the neighborhoods of $r=(\alpha,\beta)$ such that the resulting space satisfies conditions (a), (b),(c) and (d). After carrying out the induction on $\beta$, the resulting topology on $X_\alpha$ clearly satisfies conditions (i),(ii) and (iii). This completes the induction.

  As a base, the family $\bigcup\{\tau_{<\alpha}:\alpha\in\lime\}$ generates a topology $\tau$ on $X$ which is locally countable, locally compact and 0-dimensional $T_2$. Observe that $X_\alpha$ is closed discrete and $X_{<\alpha}$ is open for all $\alpha<\omg$ (by inductional hypothesises (ii) and (iii)) .

\begin{clm}\label{sorkomp2}Suppose that $F\subseteq X$ runs up to some $\alpha\in\lime$. Then there is some $\beta<\omg$ such that $F$ accumulates to $(\alpha,\beta)$. Equivalently, if $G\subseteq X$ is open and $X_\alpha\subseteq G$ then there is some $\alpha'<\alpha$ such that $(\alpha',\alpha]\times\omg\subseteq G$.
\end{clm}
\begin{proof} There is some $\beta<\omg$ such that $F=F_\alpha^\beta$. Thus by inductional hypothesis (d) there is some $\beta'\leq\beta$ such that $F$ accumulates to $(\alpha,\beta')$.
\end{proof}
\begin{clm} $X$ is not linearly D.
\end{clm}
\begin{proof} If $D\subseteq X$ is closed discrete then $\pi(D)$ is finite by Claim \ref{sorkomp2}. Thus there is no big closed discrete set for the cover $\{X_{<\alpha}:\alpha<\omg\}$.
\end{proof}
Our next aim is to prove that all closed subspaces of $X$ are irreducible.
\begin{clm}\label{dclaim2} If $|\pi(F)|=\omega$ for a closed $F\subseteq X$ then $F$ is a D-space, hence irreducible.
\end{clm}
\begin{proof} Since $F=\cup\{F\cap X_\alpha:\alpha\in\pi(F)\}$ is a countable union of closed discrete sets, $F$ is a D-space by Proposition \ref{borges}. We mention that if the ONA $U$ on $F$ has closed discrete kernel $D$ then we get an irreducible cover by taking the following open refinement: $\{(U(d)\setminus D)\cup \{d\}:d\in D\}$.
\end{proof}
\begin{clm}\label{club} If $|\pi(A)|=\omg$ for $A\subseteq X$ then there is a club $C\subseteq \omg$ such that $C\times \omg\subseteq A'$. As a consequence, if $\pi(U)$ is stationary for the open $U\subseteq X$ then there is some $\alpha<\omg$ such that $X\setminus U\subseteq \alpha\times \omg$.
\end{clm}
\begin{proof} There is a club $C\subseteq \omg$ by Claim \ref{seq} such that for all $\alpha\in C$ there is $n\in\omega$ such that $A_\alpha^n\subseteq A$. We will prove that $X_\alpha\subseteq A'$ for $\alpha\in C$. Take any point $(\alpha,\beta)\in X_\alpha$. $|M_\alpha^\beta\cap A_\alpha^n|=\omega$ for all $\beta<\omg$ by the construction of the MAD family $\mad_\alpha$ and $M_\alpha^\beta$ converges to $(\alpha,\beta)$ by inductional hypothesis (c). Thus $A_\alpha^n$ accumulates to $(\alpha,\beta)$, hence $X_\alpha\subseteq A'$.
\end{proof}
\begin{clm} If $|\pi(F)|=\omg$ for a closed $F\subseteq X$ then $F$ is irreducible.
\end{clm}
\begin{proof}Take an open cover of $F$, say $\mathcal{U}$. We can suppose that we refined it to the form $\mathcal{U}=\{U(x):x\in F\}$, where $U(x)$ is a neighborhood of $x\in F$. From Claim \ref{club} we know that there is some club $C\subseteq \omg$ such that $C\times \omg \subseteq F$. For $\alpha\in C$ define the open set $G_\alpha=\cup\{U(x):x\in X_\alpha\}$. For every $\alpha\in C$ there is some $\delta(\alpha)<\alpha$ such that $(\delta(\alpha),\alpha]\times \omg\subseteq G_\alpha$; by Claim \ref{sorkomp2}. So there is some $\delta<\omg$ and a stationary $S\subseteq C$ such that $(\delta,\alpha]\times \omg\subseteq G_\alpha$ for all $\alpha \in S$. Fix some $\delta_0>\delta$ such that $X_{\delta_0}\subseteq F$. Let $S_0=S\setminus(\delta_0+1)$. For all $\alpha \in S_0$ there is $d_\alpha\in X_\alpha\subseteq F$ such that $(\delta_0,\alpha)\in U(d_\alpha)$. Let us refine these sets: $U_0(d_\alpha)=\bigl(U(d_\alpha)\setminus (\{\delta_0\}\times S_0)\bigr)\cup\{(\delta_0,\alpha)\}$ for all $\alpha\in S_0$; let $\mathcal{U}_0=\{U_0(d_\alpha):\alpha \in S_0\}$. Clearly $\mathcal{U}_0$ is an open refinement of $\mathcal{U}$ which is minimal and $\{d_\alpha:\alpha\in\omg\}\subseteq\cup\mathcal{U}_0$. Since $S_0$ is stationary and $S_0\subseteq \pi[\cup\mathcal{U}_0]$ we get that there is some $\gamma<\omg$ such that $F_1=F\setminus \cup\mathcal{U}_0\subseteq \gamma\times\omg$ by Claim \ref{club}. So by Claim \ref{dclaim2} the closed set $F_1$ is a D-space, hence irreducible. Take a minimal open refinement of the cover $\{U(x)\setminus (\{\delta_0\}\times S_0):x\in F_1\}$, let this be $\mathcal{U}_1$. The union $\mathcal{U}_0\cup \mathcal{U}_1$ is an open refinement of $\mathcal{U}$ which covers $F$ and minimal.
\end{proof}
This proves that all closed subspaces of $X$ are irreducible. Hence $X$ is an aD-space by Theorem \ref{irreduc}.

\end{proof}

Using again the strong result of Balogh we can observe the following.

\begin{prop} Suppose MA. Let $X$ be a locally countable, locally compact space of cardinality $<\cont$. Then the following are equivalent:
\begin{enumerate}
\item $X$ is a (linearly) D-space,
\setcounter{enumi}{4}
\item $X$ is an aD-space.
\end{enumerate}
\end{prop}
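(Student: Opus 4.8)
The plan is to follow the same template as Proposition \ref{MAkov1}: reduce everything to Balogh's dichotomy (Theorem \ref{balogh}) and show that the aD hypothesis is what rules out a perfect preimage of $\omg$. The implication $(1)\Rightarrow(5)$ is immediate, since under MA and the cardinality restriction $<\cont$ condition $(1)$ already coincides with ``$X$ is a D-space'' by Proposition \ref{MAkov1}, and every D-space is aD. So the real content is the converse $(5)\Rightarrow(1)$.

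For $(5)\Rightarrow(1)$ I would argue through Balogh's theorem. By Theorem \ref{balogh} a locally countable, locally compact space of size $<\cont$ is either $\sigma$-closed-discrete (hence (linearly) D by Proposition \ref{MAkov1}) or it contains a perfect preimage $F$ of $\omg$; so it suffices to show that an aD-space of this kind contains no perfect preimage of $\omg$. Suppose toward a contradiction that $F\subseteq X$ is such a preimage. First note that $X$ is first countable: combining local compactness with local countability, each point has a countable compact neighborhood, and a countable compact Hausdorff space is metrizable, hence first countable there. Thus $F$ is closed in $X$ by Claim \ref{ppi}(ii), while it is countably compact and non-compact by Claim \ref{ppi}(i). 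Since $X$ is aD, Theorem \ref{irreduc} says every closed subspace of $X$ is irreducible; in particular $F$ is irreducible.

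The heart of the argument, and the step I expect to be the main obstacle, is the lemma that a countably compact $T_1$-space which is irreducible must be compact. Granting it, $F$ would be compact, contradicting Claim \ref{ppi}(i), and the proof is done. To prove the lemma I would take an arbitrary open cover $\mathcal{U}$ of such a space, fix a minimal open refinement $\mathcal{U}_0$ by irreducibility, and for each $V\in\mathcal{U}_0$ use minimality to choose a witness $x_V\in V$ lying in no other member of $\mathcal{U}_0$. The assignment $V\mapsto x_V$ is injective and the set $E=\{x_V:V\in\mathcal{U}_0\}$ can have no $\omega$-accumulation point: such a point would lie in some $V_0\in\mathcal{U}_0$, yet $V_0$ meets $E$ only in $x_{V_0}$. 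By countable compactness $E$, and therefore $\mathcal{U}_0$, must be finite, so selecting one member of $\mathcal{U}$ above each $V\in\mathcal{U}_0$ yields a finite subcover; as $\mathcal{U}$ was arbitrary, the space is compact. Checking that the witness set has no accumulation point is the one place requiring genuine care, the rest being a routine application of the already-established machinery.
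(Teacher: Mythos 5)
Your proposal is correct and follows essentially the same route as the paper: reduce via Theorem \ref{balogh} to showing $X$ contains no perfect preimage of $\omg$, and rule that out using Claim \ref{ppi} together with the fact that a countably compact (closed subspace of an) aD-space is compact. The only difference is that the paper simply asserts this last fact, while you supply a correct proof of it through the irreducibility characterization (Theorem \ref{irreduc}) and the finiteness of the witness set of a minimal refinement; an even more direct argument applies the definition of aD with $F=X$ and notes that closed discrete subsets of a countably compact $T_1$ space are finite.
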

\begin{proof} (1) implies (5) trivially. Suppose that $X$ is an aD-space. It is enough to show that $X$ does not contain any perfect preimage of $\omg$. Since property aD is hereditary to closed sets, any closed countably compact subspace is compact. By Claim \ref{ppi}, there is no perfect preimage of $\omg$ in $X$.
\end{proof}

\begin{cor} The existence of a locally countable, locally compact space $X$ of size $\omg$ which is aD and non linearly D is independent of ZFC.
\end{cor}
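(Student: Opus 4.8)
The plan is to establish independence by producing two models of ZFC: one in which such a space exists and one in which it does not. Since each of these models is consistent relative to ZFC, neither the existence nor the nonexistence of the space is provable in ZFC, which is exactly the assertion of the corollary.

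For the consistency of \emph{existence}, I would simply appeal to the main Theorem of this section. Under $(\diamondsuit^*)$ it yields a locally countable, locally compact (indeed 0-dimensional $T_2$) space of size $\omg$ which is aD but not linearly D. As $(\diamondsuit^*)$ holds in the constructible universe $L$ and is hence consistent with ZFC, this provides a model in which the desired space is present. No further work is needed here beyond quoting the construction.

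For the consistency of \emph{nonexistence}, I would invoke the preceding Proposition in a model of MA in which CH fails. The key point is that the failure of CH forces $\omg<\cont$, so every space of size $\omg$ has cardinality $<\cont$ and the Proposition applies to it: for locally countable, locally compact spaces of cardinality $<\cont$ the properties ``aD'' and ``(linearly) D'' coincide. Consequently, in such a model every locally countable, locally compact aD-space of size $\omg$ is automatically linearly D, so no counterexample of size $\omg$ can exist. Since MA together with $\neg$CH is consistent with ZFC, this supplies the second model, and combining the two models gives the claimed independence.

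I do not expect a genuine obstacle, since both ingredients are already established; the one point that requires care is to ensure the Martin's Axiom model also satisfies $\neg$CH, so that $\omg$ lies strictly below $\cont$ and the cardinality hypothesis ``$<\cont$'' of the Proposition is actually met. Without $\neg$CH the axiom MA becomes vacuous (it holds under CH) and a size-$\omg$ space would have cardinality exactly $\cont$, so the Proposition could not be used to exclude the $(\diamondsuit^*)$-example.
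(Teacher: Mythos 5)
Your proposal is correct and is exactly the argument the paper intends: the $(\diamondsuit^*)$ construction of the preceding theorem gives consistency of existence, and the preceding MA proposition (applied in a model of MA $+\ \neg$CH, where $\omg<\cont$) gives consistency of nonexistence. Your remark that $\neg$CH is needed to make the cardinality hypothesis $<\cont$ applicable is the right point of care, and nothing further is required.
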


However, the following remains opens.

\begin{prob}\begin{enumerate}
              \item Is it consistent with ZFC that there exists a locally countable, locally compact space $X$ of cardinality $<\cont$ such that $X$ is not (linearly) D however aD?
               \item Is there a ZFC example of a Tychonoff space $X$ such that $X$ is not (linearly) D however aD?
               \item[(2)$'$] Is there a ZFC example of a locally countable, locally compact (0-dimensional) $T_2$ space $X$ such that $X$ is not (linearly) D however aD?
            \end{enumerate}
\end{prob}

\end{document}